\newtheorem{thm}{Theorem}[section]
\newtheorem{conj}[thm]{Conjecture}
\newtheorem{defn}[thm]{Definition}
\newtheorem{cor}[thm]{Corollary}
\newtheorem{clm}[thm]{Claim}
\newtheorem{fact}[thm]{Fact}
\newtheorem{lem}[thm]{Lemma}
\begin{document}

\title{Artin Relations in the Mapping Class Group}
\author{JAMIL MORTADA}
\date{}

\maketitle

\begin{abstract}
For every integer $\ell \geq 2$, we find elements $x$ and $y$ in the mapping class group of an appropriate orientable surface $S$, satisfying the Artin relation of length $\ell$. That is, $xyx\cdots = yxy\cdots$, where each side of the equality contains $\ell$ terms. By direct computations, we first find elements $x$ and $y$ in Mod(S) satisfying Artin relations of every even length $\geq 8$, and every odd length $\geq 3$. Then using the theory of Artin groups, we give two more alternative ways for finding Artin relations in Mod(S). The first provides Artin relations of every length $\geq 3$, while the second produces Artin relations of every even length $\geq 6$. 
\end{abstract}

\section{Introduction}
\label{Intro}

Let $S = S_{g,b}$ be a compact orientable surface of genus $g$ with $b$ boundary components. Denote by Mod(S) the mapping class group of $S$, which is the group of isotopy classes of orientation preserving homeomorphisms of $S$ fixing $\partial S$ pointwise. In this paper, we investigate Artin relations in Mod(S). These relations are interesting because they allow one to relate Artin groups to mapping class groups. Finding arbitrary length Artin relations between mapping classes makes it easy to obtain representations (possibly faithful ones) of a large class of Artin groups into Mod(S). Indeed, knowing all the Artin relations satisfied by a finite collection of mapping classes, one can construct a Coxeter graph $\Gamma$ whose vertices are the mapping classes and whose edges (along with their labels) are determined by the lengths of the Artin relations between the mapping classes. In this case, there is a natural homomorphism from the Artin group $\mathcal{A}(\Gamma)$ to Mod(S). In this paper, we make use of Artin relations to find embeddings of the Artin  groups $A(I_2(\ell))$, $\ell \geq 3$, into Mod(S).
 \smallskip

If $\ell \geq 2$ is an integer and $a$ and $b$ are elements in a group $G$, we say that $a$ and $b$ satisfy the \textbf{Artin relation of length $\ell$} (or the $\ell$-Artin relation) if 
\begin{center}
$prod(a,b;\ell) = prod(b,a;\ell)$
\end{center}
\noindent where $prod(a,b;\ell) = \underbrace{aba\cdots}_{\ell}$
 
Given $\ell$ as above, we choose a suitable orientable surface $S$ (see below), and find elements $x$ and $y$ in Mod(S) such that $prod(x,y;\ell) = prod(y,x;\ell)$. This answers the second question on page 117 of \cite{W2}. We prove the following:

\begin{thm}\label{even Artin relation thm}
 Let $k \geq 2$ be an integer. Suppose $a_0,a_1,\cdots,a_k$ form a chain of simple closed curves in an orientable surface S. If $x = T_0$ and $y = T_1 \cdots T_k$, then
\begin{center}
 $prod(x,y;\ell) = prod(y,x;\ell) \Leftrightarrow \ell \equiv 0 \mathit{(mod(2k+4))}$
\end{center}
\end{thm}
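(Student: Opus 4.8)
The plan is to work throughout with the single element $z := xy = T_0 T_1 \cdots T_k$, the product of the Dehn twists along the whole chain, and to reduce both implications to a statement about the $z$-orbit of the curve $a_0$. First I would make a purely formal reduction of the Artin relations. If $\ell = 2m$ is even, then $prod(x,y;\ell) = z^m$ while $prod(y,x;\ell) = (yx)^m = x^{-1} z^m x$, so the $\ell$-Artin relation holds if and only if $z^m$ commutes with $x = T_0$; since $g\,T_c\,g^{-1} = T_{g(c)}$ and distinct isotopy classes of essential simple closed curves give distinct mapping classes, this is equivalent to $z^m(a_0) = a_0$. If $\ell = 2m+1$ is odd, then $prod(x,y;\ell) = z^m x$ whereas $prod(y,x;\ell) = y z^m$, so the $\ell$-Artin relation is equivalent to $x = z^{-m} y z^m$, i.e. to $x$ and $y$ being conjugate in Mod($S$). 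Since $2k+4$ is even, the theorem reduces to: (i) $z^m(a_0) = a_0$ if and only if $(k+2) \mid m$; and (ii) $T_0$ is not conjugate to $T_1 \cdots T_k$.

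For (i) the engine is the conjugation identity $z\,T_i\,z^{-1} = T_{i+1}$ for $0 \le i \le k-1$, which I would prove by rewriting the word $z T_i$: push the trailing $T_i$ leftward past $T_k, \dots, T_{i+2}$ (which commute with it, as those curves are disjoint from $a_i$), apply the braid relation $T_i T_{i+1} T_i = T_{i+1} T_i T_{i+1}$ once (valid since $a_i$ meets $a_{i+1}$ once), and pull the freed leading $T_{i+1}$ out past $T_{i-1}, \dots, T_0$, arriving at $T_{i+1} z$. Iterating gives $z^j(a_0) = a_j$ for $0 \le j \le k$, so the $z$-orbit of $a_0$ contains the $k+1$ distinct chain curves $a_0, \dots, a_k$ and has size at least $k+1$. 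On the other hand, the chain relation applied to the length-$(k+1)$ chain $a_0, \dots, a_k$, with $N$ a regular neighborhood of $a_0 \cup \cdots \cup a_k$, gives: for $k$ even, $N$ has two boundary curves $\delta_1, \delta_2$ and $z^{k+2} = T_{\delta_1} T_{\delta_2}$, which is disjoint from $a_0$; for $k$ odd, $N$ has one boundary curve and, after capping it off, $z^{k+2}$ becomes the hyperelliptic involution of the resulting closed genus-$\tfrac{k+1}{2}$ surface, which fixes every curve of the chain. In either case $z^{k+2}(a_0) = a_0$, so the orbit size divides $k+2$; since $k \ge 2$, the only divisor of $k+2$ that is $\ge k+1$ is $k+2$ itself, so the orbit has size exactly $k+2$, which establishes (i).

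For (ii) I would use that a conjugate of a Dehn twist is a Dehn twist, so it is enough to show $y = T_1 \cdots T_k$ is not itself a Dehn twist. By the chain relation for the length-$k$ chain $a_1, \dots, a_k$, some positive power of $y$ is a multitwist about curves disjoint from $a_1$, hence commutes with $T_1$; but $y$ does not, since the same rewriting (inside the word $T_1 \cdots T_k$) gives $y T_1 y^{-1} = T_{y(a_1)} = T_2 \ne T_1$. If $y$ were a twist $T_c$, then a positive power $T_c^N$ commuting with $T_1$ would force the geometric intersection number $i(c, a_1)$ to vanish, whence $[T_c, T_1] = 1$, a contradiction. This establishes (ii), and combining (i) for even $\ell$ with (ii) for odd $\ell$ proves the theorem. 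The step I expect to be the genuine obstacle is the $k$-odd case of $z^{k+2}(a_0) = a_0$: one must identify $z^{k+2}$ with (a lift of) the hyperelliptic involution and verify that it preserves $a_0$, which goes beyond the generic chain relation and the braid-relation bookkeeping that handles everything else.
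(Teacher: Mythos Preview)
Your proposal is correct and takes a genuinely different route from the paper's own proof. The paper proceeds by brute-force word manipulation: starting from $\ell=2$ and working upward, it repeatedly applies the commutation and braid relations together with left-cancellation to reduce $prod(x,y;\ell)=prod(y,x;\ell)$ to an elementary equivalent equation (ultimately $prod(x,y;2i)=prod(y,x;2i)\Leftrightarrow T_{k-i+2}=T_0$, with an analogous form for odd length), which it then checks by hand. Your reduction to (i) the $z$-orbit of $a_0$ for even $\ell$ and (ii) non-conjugacy of $T_0$ with $T_1\cdots T_k$ for odd $\ell$ is more structural: it explains \emph{why} the period is $2k+4$ (the orbit of $a_0$ under $z=xy$ has length exactly $k+2$) and dispatches all odd $\ell$ at once via a Dehn-twist recognition argument. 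The paper's approach is entirely self-contained, using nothing beyond the disjointness and braid relations; yours imports the chain relation and standard facts about powers of Dehn twists, but is shorter and more conceptual.

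One remark on the step you flagged as the obstacle. The $k$-odd case of $z^{k+2}(a_0)=a_0$ is handled more cleanly without the hyperelliptic involution: $(T_0\cdots T_k)^{k+2}$ is the image, under the geometric homomorphism, of the generator $(\sigma_1\cdots\sigma_{k+1})^{k+2}$ of the center of the braid group $B_{k+2}$, so it commutes with $T_0$ in $\mathrm{Mod}(S)$ regardless of the parity of $k$, giving $z^{k+2}(a_0)=a_0$ directly. This avoids capping off, where you would otherwise still owe the (true but not entirely trivial) check that isotopy of $z^{k+2}(a_0)$ and $a_0$ in the capped surface implies isotopy in $S$.
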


\begin{figure}[t]
	\centering
		\includegraphics[width=0.75\textwidth]{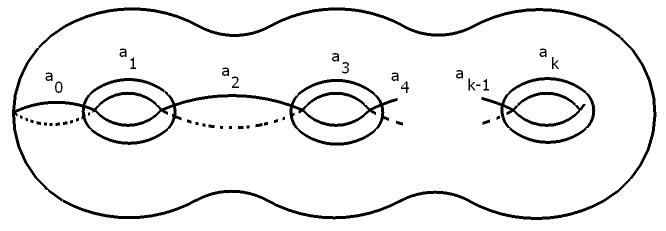}
	\caption{{\small Curves $a_0, \cdots, a_k$ form a chain of length $k+1$. If $x = T_0$ and $y = T_1 \cdots T_k$, then $prod(x,y;2k+4) = prod(y,x;2k+4)$.}}
	\label{Chain of length k+1}
\end{figure}
 
\noindent \textbf{Notation.} In Theorem~\ref{odd Artin relation theorem} below, we shall change the notation of a Dehn twist in order to make it easier for the reader to follow the proof. Instead of $T_i$, we shall denote Dehn twists by $A_i$ and $B_i$. $A_i$ and $B_i$ represent Dehn twists along curves $a_i$ and $b_i$ respectively.

\begin{thm} \label{odd Artin relation theorem}
Let $k$ be a positive integer, and suppose that $a_1,\cdots,a_k,b_1,\cdots,b_k$ form a curve chain $\mathcal{C}_{2k}$ in an orientable surface $S$. If $x = A_1 \cdots A_k$ and $y = B_1 \cdots B_k$, then
\begin{center}
 $prod(x,y;\ell) = prod(y,x;\ell) \Leftrightarrow \ell \equiv 0 \mathit{(mod(2k+1))}$
\end{center}
\end{thm}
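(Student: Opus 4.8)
The plan is to realise the twists along $\mathcal C_{2k}$ as the image of the standard generators of the Artin group $\mathcal A(A_{2k})$ of type $A_{2k}$, extract the single relation $prod(x,y;2k+1)=prod(y,x;2k+1)$ from the Coxeter combinatorics, propagate it to all multiples of $2k+1$ by a block argument, and finally kill the short relations by passing to a quotient of $\langle x,y\rangle$ on which everything can be read off. Order the chain as $a_1,b_1,a_2,b_2,\dots,a_k,b_k$, so that $A_i$ commutes with $A_j$, $B_i$ commutes with $B_j$, $A_i$ commutes with $B_j$ unless $j\in\{i-1,i\}$, and $A_iB_iA_i=B_iA_iB_i$, $A_{i+1}B_iA_{i+1}=B_iA_{i+1}B_i$. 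These are exactly the Artin relations of type $A_{2k}$, so there is a homomorphism $\varphi\colon\mathcal A(A_{2k})\to\mathrm{Mod}(S)$ carrying the standard generators $s_1,\dots,s_{2k}$, in chain order, to $A_1,B_1,A_2,\dots,B_k$; in particular $\varphi$ carries $X:=s_1s_3\cdots s_{2k-1}\mapsto x$ and $Y:=s_2s_4\cdots s_{2k}\mapsto y$. Any identity between $X$ and $Y$ in $\mathcal A(A_{2k})$ then descends to one between $x$ and $y$; to forbid identities I will use a quotient instead.

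The crux is the relation $prod(x,y;2k+1)=prod(y,x;2k+1)$. Here $XY=s_1s_3\cdots s_{2k-1}\,s_2s_4\cdots s_{2k}$ uses each standard generator once, i.e.\ $XY$ is a Coxeter element, and $prod(X,Y;2k+1)=(XY)^kX$ and $prod(Y,X;2k+1)=(YX)^kY$ are positive words of length $k(2k+1)=\binom{2k+1}{2}$. Each of these multiplies out, in $W(A_{2k})=S_{2k+1}$, to the longest element $w_0$: this is the statement that a bipartite word of Coxeter-number length represents $w_0$, and the two words are interchanged by the order-two diagram automorphism of $A_{2k}$, which fixes $w_0$. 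Being two reduced words for $w_0$, by Matsumoto's theorem they are already equal in $\mathcal A(A_{2k})$ — so $prod(x,y;2k+1)=prod(y,x;2k+1)$ in $\mathrm{Mod}(S)$, this common value being $\varphi(\Delta)$. Alternatively, and presumably closer to the ``direct computation'' used for Theorems~\ref{even Artin relation thm}--\ref{odd Artin relation theorem}, one proves this by induction on $k$: the base case $k=1$ is the braid relation $A_1B_1A_1=B_1A_1B_1$, and the inductive step peels $A_k,B_k,B_{k-1}$ off the ends of the $(2k+1)$-term word using the commutation and braid relations until the shorter chain's identity $prod(A_1\cdots A_{k-1},\,B_1\cdots B_{k-1};2k-1)=prod(B_1\cdots B_{k-1},\,A_1\cdots A_{k-1};2k-1)$ can be substituted in the middle.

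With $\Delta':=prod(x,y;2k+1)=prod(y,x;2k+1)$ in hand, split the alternating word $prod(x,y;\ell)$ into consecutive blocks of $2k+1$ letters. Because $2k+1$ is odd, successive blocks swap their leading letter, so each complete block equals $\Delta'$; hence if $\ell=m(2k+1)$ then $prod(x,y;\ell)=(\Delta')^m=prod(y,x;\ell)$, which gives ``$\Leftarrow$'' (the case $\ell=0$ being vacuous). If $2k+1\nmid\ell$, write $\ell=m(2k+1)+r$ with $1\le r\le 2k$; the same decomposition gives $prod(x,y;\ell)=(\Delta')^m\cdot prod(x,y;r)$ and $prod(y,x;\ell)=(\Delta')^m\cdot prod(y,x;r)$, with $x$ and $y$ possibly interchanged in the tail according to the parity of $m$, so $prod(x,y;\ell)=prod(y,x;\ell)$ forces $prod(x,y;r)=prod(y,x;r)$. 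Thus ``$\Rightarrow$'' reduces to showing that $x$ and $y$ satisfy no $r$-Artin relation for $1\le r\le 2k$.

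For that I would use the action of $\mathrm{Mod}(S)$ on $H_1(S;\mathbb F_2)$, restricted to the span $V_2$ of the classes $[a_i],[b_i]$, which for the chosen $S$ I take to be $2k$-dimensional over $\mathbb F_2$. Each twist acts on $V_2$ by a transvection, hence by an involution mod $2$; the resulting involutions along the chain satisfy the Coxeter relations of $A_{2k}$ and generate $W(A_{2k})=S_{2k+1}$ acting on the mod-$2$ root lattice of $A_{2k}$ — an action that is faithful because $2k+1$ is odd (so the all-ones vector is not in the mod-$2$ root lattice, and no nontrivial permutation fixes every class $[a_i],[b_i]$ modulo $2$). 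Under the induced homomorphism $\langle x,y\rangle\to S_{2k+1}$ one gets $x\mapsto\bar X=(1\,2)(3\,4)\cdots(2k-1\,2k)$ and $y\mapsto\bar Y=(2\,3)(4\,5)\cdots(2k\,2k+1)$, products of disjoint transpositions whose product $\bar X\bar Y$ is a single $(2k+1)$-cycle. A short dihedral computation then shows $prod(\bar X,\bar Y;r)=prod(\bar Y,\bar X;r)$ iff $(\bar X\bar Y)^r=1$ iff $(2k+1)\mid r$, so the $r$-Artin relation fails in $S_{2k+1}$, hence in $\langle x,y\rangle$, for every $r$ with $1\le r\le 2k$; this finishes the ``$\Rightarrow$'' direction. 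The main obstacle is Step~2, the single relation $prod(x,y;2k+1)=prod(y,x;2k+1)$: the remaining steps are block bookkeeping and a computation in a symmetric group, whereas this identity needs either the bipartite-Coxeter-element fact about reduced words for $w_0$ or the full inductive braid-relation calculation on the chain, which is where the real work concentrates.
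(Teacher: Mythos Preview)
Your reading of the chain is off, and this undermines both halves of the argument. By Definition~\ref{chain relation defn}, the listed order $a_1,\ldots,a_k,b_1,\ldots,b_k$ \emph{is} the adjacency order of $\mathcal C_{2k}$: so $i(a_j,a_{j+1})=1$, $i(a_k,b_1)=1$, $i(b_j,b_{j+1})=1$, and all other pairs are disjoint. In particular the $A_i$ do \emph{not} pairwise commute (nor do the $B_i$), and your relabelling ``$a_1,b_1,a_2,b_2,\ldots$'' changes the very elements $x,y$ whose Artin relations are in question. What you have sketched is essentially the dihedral-folding argument behind Theorem~\ref{bdrelthm1} (for a $(2k)$-chain), not a proof of Theorem~\ref{odd Artin relation theorem}.

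With the correct chain both of your key steps break. Under the geometric homomorphism $\mathcal A(A_{2k})\to\mathrm{Mod}(S)$ one has $X=s_1s_2\cdots s_k$ and $Y=s_{k+1}\cdots s_{2k}$, strings of \emph{consecutive} generators. Your Matsumoto/$w_0$ argument for the $(2k{+}1)$-relation used that $X,Y$ are the bipartite halves and are swapped by the diagram automorphism of $A_{2k}$; here that automorphism $s_i\mapsto s_{2k+1-i}$ sends $X$ to the \emph{reversal} of $Y$, not to $Y$, so the symmetry does not hand you the identity. For the converse, the images $\bar X,\bar Y\in S_{2k+1}$ are $(k{+}1)$-cycles rather than involutions, and the dihedral equivalence $prod(\bar X,\bar Y;r)=prod(\bar Y,\bar X;r)\iff(\bar X\bar Y)^r=1$ genuinely relies on $\bar X^2=\bar Y^2=1$; without that the equivalence is false, so the $H_1(\,\cdot\,;\mathbb F_2)$ route does not conclude. (Your block reduction to $1\le r\le 2k$ is fine and matches how the paper organises the converse.)

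The paper's own proof is the direct-computation route in $\mathrm{Mod}(S)$: using only Facts~\ref{comm reln} and~\ref{braid reln} among the $A_i,B_j$, it proves by induction on $m$ two claims (Claims~\ref{claim1o} and~\ref{claim2o}) giving short equivalent forms of $(xy)^m=(yx)^m$ and of $(xy)^mx=(yx)^my$; at $m=k$ Claim~\ref{claim2o} collapses to $A_1\cdots A_k=A_1\cdots A_k$, yielding the $(2k{+}1)$-relation, and for $1\le r\le 2k$ the simplified forms are shown to fail by evaluating both sides on a specific curve.
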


In the next two theorems, $S_{A_i}$ and  $S_{D_i}$ represent surfaces canonically associated to the Coxeter graphs $A_i$ and $D_i$ shown below. 

\begin{thm} \label{bdrelthm1}
Let $k \geq 3$ be an integer. Suppose that $a_1,a_2,\cdots,a_{k-1}$ form a $(k-1)$-chain in $S_{A_{k-1}}$. Let
\[
 x = 
  \left\{
   \begin{array}{ll}
    T_1 T_3 \cdots T_{k-3} T_{k-1} & \mbox{when $k$ is even} \\
    T_1 T_3 \cdots T_{k-4} T_{k-2} & \mbox{when $k$ is odd}
   \end{array}
  \right.
\]
 \smallskip  
\[
 y = 
  \left\{
   \begin{array}{ll}
    T_2 T_4 \cdots T_{k-4} T_{k-2} & \mbox{when $k$ is even} \\
    T_2 T_4 \cdots T_{k-3} T_{k-1} & \mbox{when $k$ is odd}
   \end{array}
  \right.
\] 
\noindent Then $x$ and $y$ generate the Artin group $\mathcal{A}(I_2(k))$ in $Mod(S_{A_{k-1}})$. Moreover,
\begin{center}
 $prod(x,y;n) = prod(y,x;n)$ if and only if $n \equiv 0 \mathit{mod (k)}$
\end{center}
\end{thm}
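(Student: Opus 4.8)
\medskip
\noindent\textbf{Proof strategy.}\ The plan is to pull the whole statement back into the braid group $\mathcal{A}(A_{k-1})=B_k$. First I would recall that $S_{A_{k-1}}$ is exactly the surface carrying a $(k-1)$-chain $a_1,\dots,a_{k-1}$, and that $\sigma_i\mapsto T_i$ defines an isomorphism of $\mathcal{A}(A_{k-1})$ onto $\langle T_1,\dots,T_{k-1}\rangle\le\mathrm{Mod}(S_{A_{k-1}})$: the Artin relations hold because consecutive curves in a chain meet once and non-consecutive ones are disjoint, and injectivity is classical for type $A$ (it is essentially the identification of $B_k$ with the mapping class group of a punctured disc through the hyperelliptic branched cover; cf.\ Birman--Hilden and the Perron--Vannier construction). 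Under this identification, $x$ and $y$ are the products of the odd-indexed and of the even-indexed standard generators respectively; since the generators occurring in $x$ (resp.\ $y$) are pairwise non-adjacent in the chain, they commute, so $x$ and $y$ are well defined. It thus suffices to prove, inside $B_k$, that $\langle x,y\rangle\cong\mathcal{A}(I_2(k))$ and that $prod(x,y;n)=prod(y,x;n)$ exactly when $k\mid n$.

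\emph{The $k$-Artin relation, and the surjection.} I would first show $prod(x,y;k)=prod(y,x;k)=\Delta$, the Garside half-twist of $B_k$. Both are positive words of length $\binom{k}{2}$ in $\sigma_1,\dots,\sigma_{k-1}$, and under $B_k\twoheadrightarrow S_k$ they map to $prod(\bar x,\bar y;k)$ and $prod(\bar y,\bar x;k)$; here $\bar x,\bar y$ are involutions and $\bar x\bar y$ is a bipartite Coxeter element of $S_k$, of order equal to the Coxeter number $k$. By the standard theory of bipartite Coxeter elements, $prod(\bar x,\bar y;k)=prod(\bar y,\bar x;k)$ is the longest element $w_0$, whose length is $\binom{k}{2}$; hence each of $prod(x,y;k)$ and $prod(y,x;k)$ is the lift of a reduced expression of $w_0$, and by the word property of Coxeter groups (Tits) every such lift equals $\Delta$. (Alternatively this identity can be obtained by induction on $k$ from the braid and commutation relations alone.) In particular $x$ and $y$ satisfy the $k$-Artin relation, which gives a surjection $\Phi\colon\mathcal{A}(I_2(k))\twoheadrightarrow\langle x,y\rangle$.

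\emph{Injectivity of $\Phi$ --- the main obstacle.} It remains to see that $x$ and $y$ satisfy no relation not already forced by the $k$-Artin relation, i.e.\ that $\ker\Phi=1$; this is the heart of the matter. I would argue via the positive monoid $B_k^{+}$. There the least common multiple of the pairwise-commuting odd-indexed generators is the positive element $x$, and likewise for $y$; their greatest common divisor is trivial because the two sets of generators are disjoint; and, by the previous step, the least common multiple of $x$ and $y$ is $\Delta=prod(x,y;k)=prod(y,x;k)$. These are exactly the hypotheses of an LCM-homomorphism $\mathcal{A}^{+}(I_2(k))\to B_k^{+}$, $X\mapsto x$, $Y\mapsto y$; such homomorphisms are injective on monoids, and since each of these monoids is Garside --- so each ambient group is its group of fractions --- $\Phi$ is injective on $\mathcal{A}(I_2(k))$ (see Crisp's work on injective maps between Artin groups). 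The delicate point, and where I expect the real work, is showing that any sequence of braid and commutation moves carrying a word $\Phi(w)$ to $\Phi(w')$ can be reorganised to come from applications of the single relation $prod(X,Y;k)=prod(Y,X;k)$; this is precisely where the lcm/gcd computation above enters. With the first paragraph, this yields $\langle x,y\rangle\cong\mathcal{A}(I_2(k))$ in $\mathrm{Mod}(S_{A_{k-1}})$.

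\emph{The congruence condition.} Granting the isomorphism, $prod(x,y;n)=prod(y,x;n)\iff n\equiv 0\pmod{k}$ is the familiar property of dihedral Artin groups; but it also follows directly in $B_k$. For "$\Rightarrow$", project to $S_k$: there $\bar x\bar y$ has order exactly $k$, and a one-line manipulation using $\bar y\bar x=(\bar x\bar y)^{-1}$ shows that $prod(\bar x,\bar y;n)=prod(\bar y,\bar x;n)$ forces $(\bar x\bar y)^{n}=1$, i.e.\ $k\mid n$. For "$\Leftarrow$", use $prod(x,y;k)=prod(y,x;k)=\Delta$ and the recursion, obtained by reading the alternating word past its first $k$ letters, $prod(x,y;k+j)=\Delta\cdot w_j$ and $prod(y,x;k+j)=\Delta\cdot w_j'$, where $\{w_j,w_j'\}=\{prod(x,y;j),\,prod(y,x;j)\}$; left-cancelling $\Delta$ shows the $(k+j)$-Artin relation holds iff $prod(x,y;j)=prod(y,x;j)$, so the validity of the $n$-Artin relation depends only on $n\bmod k$. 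As it holds trivially for $n=0$, it holds whenever $k\mid n$, completing the equivalence. (For $k$ odd one may compare this with Theorem~\ref{odd Artin relation theorem}, which records the same period for a related chain; the new point here is that the relation is realised by an actual embedding $\mathcal{A}(I_2(k))\hookrightarrow\mathrm{Mod}(S_{A_{k-1}})$.)
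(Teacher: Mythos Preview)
Your proposal is correct and follows the same overall architecture as the paper: the embedding $\mathcal{A}(I_2(k))\hookrightarrow\mathcal{A}(A_{k-1})$ via Crisp's LCM-homomorphism machinery, composed with the Perron--Vannier injectivity of the geometric homomorphism $\mathcal{A}(A_{k-1})\hookrightarrow\mathrm{Mod}(S_{A_{k-1}})$.

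The differences are in the auxiliary steps. The paper packages the identity $prod(x,y;k)=prod(y,x;k)$ into the ``dihedral folding'' formalism (their Corollary~\ref{corollary to cr3}): once one knows that $A_{k-1}\to I_2(k)$ is a dihedral folding, the LCM-homomorphism exists by fiat and the relation comes for free. You instead verify the relation by hand, projecting to $S_k$, identifying $\bar x\bar y$ as a bipartite Coxeter element, and using the Tits word property to recognise $prod(x,y;k)$ as a reduced lift of $w_0$, hence $\Delta$; this is more explicit and has the side benefit of giving the Garside element on the nose. For the ``only if'' direction, the paper pulls back along the embedding to $\mathcal{A}(I_2(k))$ and invokes the (unproved but standard) fact that no shorter Artin relation holds there; your projection to $S_k$ and the computation $(\bar x\bar y)^n=1\Rightarrow k\mid n$ is more self-contained and avoids that appeal. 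Both routes are sound; yours trades a clean citation for a concrete computation.
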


\begin{thm} \label{bdrelthm2}
Let $ k \geq 4$ be an integer, and suppose that curves $a_1,a_2,\cdots,a_k$ have curve graph $D_k$ in $S_{D_k}$. Let
\[
 x = 
  \left\{
   \begin{array}{ll}
    T_1 T_3 \cdots T_{k-3} T_{k-1} T_k & \mbox{when $k$ is even} \\
    T_1 T_3 \cdots T_{k-2} & \mbox{when $k$ is odd} \\
    \end{array}
  \right.
\]
 \smallskip  
\[
 y = 
  \left\{
   \begin{array}{ll}
    T_2 T_4 \cdots T_{k-2} & \mbox{when $k$ is even} \\
    T_2 T_4 \cdots T_{k-3} T_{k-1} T_k & \mbox{when $k$ is odd}
   \end{array}
  \right.
\]
\noindent Then $x$ and $y$ generate the Artin group $\mathcal{A}(I_2(2k-2))$ in $Mod(S_{D_k})$. Moreover,
\begin{center}
 $prod(x,y;n) = prod(y,x;n)$ if and only if $n \equiv 0 \mathit{mod(2k-2)}$
\end{center}
\end{thm}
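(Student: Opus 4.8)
The plan is to read Theorem~\ref{bdrelthm2} as the type-$D_k$ instance of a general fact about spherical Artin groups: if $W$ is irreducible with Coxeter number $h$, and the simple generators are partitioned, along a proper $2$-colouring of the (bipartite, tree) Coxeter diagram, into two pairwise-commuting families, then in $\mathcal{A}(W)$ the products $x$ and $y$ of the two families satisfy the $h$-Artin relation and generate a copy of $\mathcal{A}(I_2(h))$. The first step is bookkeeping: separating the cases $k$ even and $k$ odd, one checks directly from the two displayed index sets that they are exactly the two colour classes of $D_k$, the leaves $a_{k-1}$ and $a_k$ always lying in the class opposite to that of $a_{k-2}$. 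Hence the curves occurring in $x$ are pairwise disjoint (and likewise for $y$), so the twists inside $x$ commute, those inside $y$ commute, and $\delta := xy$ is the standard Artin-monoid lift of a bipartite Coxeter element $c$ of $W(D_k)$; recall that here $h=h(D_k)=2k-2$, that $c^{k-1}=c^{h/2}=w_0$ is the longest element of $W(D_k)$, and that $\ell(w_0)=N=k(k-1)$.

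Next I would reduce the theorem to two external inputs: (i) the Dehn-twist homomorphism $\rho\colon\mathcal{A}(D_k)\to Mod(S_{D_k})$, $s_i\mapsto T_i$, is injective (Perron--Vannier; Wajnryb), and (ii) the assignment $X\mapsto x$, $Y\mapsto y$ on the standard generators defines an \emph{injective} homomorphism $\psi\colon\mathcal{A}(I_2(2k-2))\to\mathcal{A}(D_k)$. Granting these, $\rho\circ\psi$ identifies $\mathcal{A}(I_2(2k-2))$ with $\langle x,y\rangle\le Mod(S_{D_k})$, and the ``Moreover'' equivalence is transported from the corresponding statement in the dihedral Artin group. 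That statement is elementary: writing $n=q(2k-2)+r$ with $0\le r<2k-2$, one has $prod(X,Y;n)=\Delta_{I_2}^{\,q}\,prod(X,Y;r)$ and $prod(Y,X;n)=\Delta_{I_2}^{\,q}\,prod(Y,X;r)$, with $\Delta_{I_2}=prod(X,Y;2k-2)=prod(Y,X;2k-2)$ the Garside element, so the two sides agree iff $prod(X,Y;r)=prod(Y,X;r)$, which for $0<r<2k-2$ already fails in the dihedral quotient $W(I_2(2k-2))$ because there the ratio of the two products is $(XY)^{\pm r}\ne 1$.

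It remains to establish (ii). The well-definedness half --- the $(2k-2)$-Artin relation between $x$ and $y$ --- I would prove inside $\mathcal{A}(D_k)$. It is well known that $\delta^{\,h}=\Delta^2$, where $\Delta$ is the Garside element; since $\delta^{\,k-1}$ is a positive element which is a product of $k(k-1)=N$ atoms and projects to $c^{\,k-1}=w_0$ in $W(D_k)$, it is a reduced lift of $w_0$, hence $\delta^{\,k-1}=\Delta$. As $2k-2$ is even, $prod(x,y;2k-2)=(xy)^{k-1}=\Delta$, and a routine rewriting gives $prod(y,x;2k-2)=(yx)^{k-1}=y\,\Delta\,y^{-1}$; so the relation reduces to $\Delta$ commuting with $y$ (and, by symmetry, with $x$). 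This holds because conjugation by $\Delta$ induces the diagram automorphism of $D_k$ determined by $w_0$: for $k$ even, $w_0=-1$ and $\Delta$ is central; for $k$ odd it merely swaps the leaves $a_{k-1}\leftrightarrow a_k$, which fixes $x$ (which involves neither) and fixes $y$ (which involves both, and $T_{k-1},T_k$ commute). Alternatively, this $(2k-2)$-Artin relation can be verified by a direct computation with chain and star relations in $Mod(S_{D_k})$, in the spirit of Theorems~\ref{even Artin relation thm} and \ref{odd Artin relation theorem}.

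The real obstacle is the injectivity of $\psi$: that $x$ and $y$ satisfy no relation beyond the $(2k-2)$-Artin relation, so $\langle x,y\rangle$ is not a proper quotient of $\mathcal{A}(I_2(2k-2))$. I would obtain it from the fact that $\psi$ is an LCM-homomorphism in the sense of Crisp: $x$ and $y$ are the respective least common multiples, in the Garside monoid $\mathcal{A}(D_k)^{+}$, of the two commuting families of standard atoms (here one notes that the join in the weak order of the two colour-class products is $w_0$, since their combined right inversion set contains every simple root), and the computation above exhibits their least common left-multiple as $\Delta=prod(x,y;2k-2)=prod(y,x;2k-2)$; injectivity of such homomorphisms between spherical Artin monoids is known (Crisp; Godelle; compare also the dual braid monoid of Bessis, in which $x$ and $y$ are specific simple elements). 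For the ``Moreover'' equivalence in isolation one can avoid (ii) and instead show $prod(x,y;n)\ne prod(y,x;n)$ for $0<n<2k-2$ directly in $Mod(S_{D_k})$ via the symplectic representation on $H_1(S_{D_k})$, using that the Coxeter transformation $xy$ has a primitive $(2k-2)$-th root of unity as a simple eigenvalue --- but the full assertion $\langle x,y\rangle\cong\mathcal{A}(I_2(2k-2))$ genuinely needs the embedding inputs (i) and (ii).
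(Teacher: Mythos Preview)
Your proposal is correct and follows essentially the same route as the paper: both arguments compose Crisp's LCM-homomorphism induced by the dihedral folding $D_k\to I_2(2k-2)$ (giving the injection $\mathcal{A}(I_2(2k-2))\hookrightarrow\mathcal{A}(D_k)$) with the Perron--Vannier injectivity of the geometric homomorphism, and then read off the ``Moreover'' clause from the injectivity of the composite. The paper simply invokes Corollary~\ref{corollary to cr3} and Theorem~\ref{cr2} for both well-definedness and injectivity of $\psi$, whereas you additionally supply an independent Garside-theoretic verification of the $(2k-2)$-Artin relation via $\delta^{k-1}=\Delta$ and the action of the diagram automorphism; this extra argument is sound but not needed once Crisp is cited.
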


Two approaches are used to find Artin relations in Mod(S). The first approach includes some rather involved computations, which rely on the commutativity and braid relations between Dehn twists (Fact~\ref{comm reln} and~\ref{braid reln}). The second approach makes use of Artin groups, specifically the LCM-homomorphisms (definition~\ref{lcm hom}) induced by certain dihedral foldings. In this approach, we actually find elements $x$ and $y$ in Mod(S) that generate an Artin group with one relation. We briefly describe both methods for finding Artin relations.

\begin{figure}[t]
	\centering
		\includegraphics[width=0.82\textwidth]{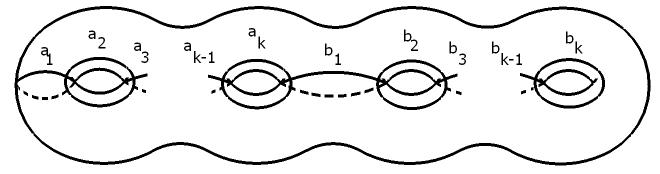}
	\caption{{\small Curves $a_1, \cdots, a_k, b_1, \cdots, b_k$ form a chain of length $2k$, $k \geq 1$. If $x = A_1 \cdots A_k$ and $y = B_1 \cdots B_k$, then $prod(x,y;2k+1) = prod(y,x;2k+1)$.}}
	\label{chain picture for odd Artin relation theorem}
\end{figure}

\begin{defn} \label{chain relation defn}  
  A finite collection $\{a_1,\cdots,a_p\}$ of pairwise non-isotopic simple closed curves in $S$ forms a chain of length $p$ ($p$-chain for short) if $i(a_j,a_{j+1}) = 1$ for all $j$ and $i(a_j,a_k) = 0$ for $|j-k| \geq 2$. Denote a $p-chain$ by $\mathcal{C}_p$.
\end{defn}

In the first approach, $x$ and $y$ are taken to be products of Dehn twists along curves in some curve chain $\mathcal{C}_p$. Depending on $\ell$, we choose $p$ accordingly. Given $p$, we choose an orientable surface $S$ with large enough genus to accommodate $\mathcal{C}_p$. Given $S$, we prescribe $x, y \in$ Mod(S) and prove that $prod(x,y;\ell) = prod(y,x;\ell)$. 
 \medskip
 
The second approach is rather different. Consider the Coxeter graphs $A_n$, $D_n$, and $I_2(n)$ (illustrated below), and their associated Artin groups $\mathcal{A}(A_n)$, $\mathcal{A}(D_n)$, and $\mathcal{A}(I_2(n))$. In this approach, we invoke the LCM-homomorphisms induced by the dihedral foldings $A_{n-1} \rightarrow I_2(n)$ and $D_n \rightarrow I_2(2n-2)$. The induced embeddings $\mathcal{A}(I_2(n)) \rightarrow \mathcal{A}(A_{n-1})$ and $\mathcal{A}(I_2(2n-2)) \rightarrow \mathcal{A}(D_n)$ provide $n$ and $(2n-2)$-Artin relations in $\mathcal{A}(A_{n-1})$ and $\mathcal{A}(D_n)$ respectively. By Theorem~\ref{pv} [Perron-Vannier], the geometric homomorphism (definition~\ref{geometric hom}) is actually a monomorphism for the Artin groups of types $A_n$ and $D_n$. As such, the Artin relations carry on, via the geometric homomorphism, to the corresponding mapping class groups. 
 \medskip
 
\begin{center}
\begin{tikzpicture}
\filldraw [black] (1.5,0) circle (2pt) 
(2.5,0) circle (2pt)
(3.5,0) circle (2pt)
(4.5,0) circle (2pt)
(5.5,0) circle (2pt)
(6.5,0) circle (2pt)
[black] (3.7,0) circle (0.25pt)
(3.7,0) circle (0.25pt)
(3.7,0) circle (0.25pt)
(4.1,0) circle (0.25pt)
(4.3,0) circle (0.25pt);
\draw (1.5,0) -- (2.5,0) -- (3.5,0);
\draw (4.5,0) -- (5.5,0) -- (6.5,0);
 \draw(1.5,-0.30)node {$s_1$};
 \draw(2.5,-0.30)node {$s_2$};
 \draw(3.5,-0.30)node {$s_3$};
 \draw(4.5,-0.30)node {$s_{n-2}$};
 \draw(5.5,-0.30)node {$s_{n-1}$};
 \draw(6.5,-0.30)node {$s_n$};
 \draw(0,0)node {$A_n =$};
 \draw(8.5,0)node {($n \geq 2)$};
\end{tikzpicture} \\
 \smallskip 
 
  \begin{tikzpicture}
\filldraw [black] (1.5,0) circle (2pt) 
(2.5,0) circle (2pt)
(3.5,0) circle (2pt)
(4.5,0) circle (2pt)
(5.5,0) circle (2pt)
(6.5,0.7) circle (2pt) 
(6.5,-0.7) circle (2pt)
[black] (3.7,0) circle (0.25pt)
(3.7,0) circle (0.25pt)
(3.9,0) circle (0.25pt)
(4.1,0) circle (0.25pt)
(4.3,0) circle (0.25pt);
\draw (1.5,0) -- (2.5,0) -- (3.5,0);
\draw (5.5,0) -- (6.5,0.7);
\draw (5.5,0) -- (6.5,-0.7);
\draw (4.5,0) -- (5.5,0);
 \draw(1.5,-0.30)node {$s_1$};
 \draw(2.5,-0.30)node {$s_2$};
 \draw(3.5,-0.30)node {$s_3$};
 \draw(4.3,-0.30)node {$s_{n-3}$};
 \draw(5.2,-0.30)node {$s_{n-2}$};
 \draw(6.5,1)node {$s_n$};
 \draw(6.5,-1)node {$s_{n-1}$};
 \draw(0,0)node {$D_n =$};
 \draw(8.5,0)node {($n \geq 4)$};
\end{tikzpicture}\\
 \smallskip 
 
 \begin{tikzpicture}
\filldraw [black] (1.5,0) circle (2pt) 
(2.5,0) circle (2pt);
\draw (1.5,0) -- (2.5,0);
 \draw(1.5,-0.30)node {$s$};
 \draw(2.5,-0.30)node {$t$};
 \draw(2,0.25)node{$n$};
 \draw(0.2,0)node {$I_2(n) =$};
 \draw(8.5,0)node {($n \geq 3)$};
\end{tikzpicture}\\
\smallskip
 
{\small The Coxeter graphs of types $A_n$, $D_n$, and $I_2(n)$}
\end{center}

\section{Artin groups}
\label{sec:1}
 
 \subsection{Basic facts}
  \label{sec:2}
  
  A Coxeter system of rank $n$ is a pair $(W,S)$ consisting of a finite set $S$ of order $n$ and a group $W$ with presentation
\begin{center}
 $\langle S \hspace{0.1cm} | \hspace{0.1cm}  s^2 = 1 \hspace{0.1cm} \forall \hspace{0.1cm} s \in S, \hspace{0.2cm} prod(s,t;m_{st}) = prod(t,s;m_{st}) \hspace{0.2cm} such \hspace{0.2cm} that \hspace{0.2cm} m_{st} \neq \infty \rangle$
\end{center}
\noindent where $m_{ss} = 1$ and $m_{st} = m_{ts} \in \{ 2,3,\cdots,\infty \}$ for $s \neq t$. $m_{st} = \infty$ means that there is no relation between $s$ and $t$. 
 \smallskip
 
A Coxeter system is determined by its Coxeter graph $\Gamma$. This graph has vertex set $S$ and includes an edge labeled $m_{st}$, between $s$ and $t$, whenever $m_{st} \geq 3$. The label $m_{st} = 3$ is usually omitted. The graph $\Gamma$ defines the type of a Coxeter group. We say that $W$ is a Coxeter group of type $\Gamma$, and denote it by $W(\Gamma)$. Alternatively, a Coxeter system can be uniquely determined by its Coxeter matrix $M = (m_{ij})_{i,j \in S}$, where $M$ is an $n \times n$ symmetric matrix with ones on the main diagonal and entries in $\{2, \cdots, \infty \}$ elsewhere. When $W$ is finite, we refer to it as a Coxeter group of finite type. Otherwise, $W$ is of infinite type. 
 \smallskip
 
The Artin group, $\mathcal{A}(\Gamma)$, of type $\Gamma$ has presentation
\begin{center}
  $\langle S \hspace{0.1cm} | \hspace{0.1cm} prod(s,t;m_{st}) = prod(t,s;m_{st}) \hspace{0.2cm} such \hspace{0.2cm} that \hspace{0.2cm} m_{st} \neq \infty \rangle$ ($\ast$)
\end{center}

It is clear from the presentations that $W(\Gamma)$ is a quotient of $\mathcal{A}(\Gamma)$. It is the quotient of $\mathcal{A}(\Gamma)$ by the normal closure of $\{ s^2 | s \in S \}$. We say that an Artin group has finite type, if its corresponding Coxeter group 
is finite. 
\smallskip

Consider $F(S)^+$, the free monoid (semigroup with $1$) of positive words in the alphabet of $S$. The Artin monoid $\mathcal{A}^{+}(\Gamma)$ of type $\Gamma$ is obtained from $F(S)^+$ by stipulating that $prod(s,t;m_{st}) \d{=} prod(s,t;m_{ts})$ for all $s,t \in S$ and $m_{st} \neq \infty$. The equality \d{=} denotes the positive word equivalence in $\mathcal{A}^{+}(\Gamma)$ (as opposed to the word equivalence in the group $\mathcal{A}(\Gamma)$ which is denoted by $=$). In other words, $\mathcal{A}^+(\Gamma)$ is given by ($\ast$), considered as a monoid presentation.

\begin{defn} \label{monoid homomorphism}
Let $M$ and $N$ be monoids. A map $\phi : M \rightarrow N$ is said to be a monoid homomorphism if $f(xy) = f(x)f(y)$ for all $x,y \in M$ and $f(1_M) = 1_N$. 
\end{defn}
\noindent We state the following useful facts about Artin monoids and Artin groups. More information can be found in \cite{BS}. 

\begin{itemize}
\item If $\Gamma$ is of finite type, then the canonical homomorphism $\mathcal{A}^{+}(\Gamma) \rightarrow \mathcal{A}(\Gamma)$ is injective.
\item The Artin monoid $\mathcal{A}^{+}(\Gamma)$ is cancellative. That is, $UA_1V$ \d{=} $UA_2V$ implies $A_1$ \d{=} $A_2$.
\item We say that $U$ divides $V$ (on the left), and write $U|V$, if $V$ \d{=} $UV'$ for some $V' \in \mathcal{A}^{+}(\Gamma)$. 
\item An element $V$ is said to be a common multiple for a finite subset $\mathcal{U} = \{ U_1,\cdots,U_r \}$ of $\mathcal{A}^{+}(\Gamma)$ if $U_i|V$ for each $i = 1,\cdots,r$. It is shown in [1] that if a common multiple of $\mathcal{U}$ exists, then there exists a necessarily unique least common multiple of $\mathcal{U}$. This least common multiple is a common multiple which divides all the common multiples of $\mathcal{U}$, and is denoted by $[U_1,\cdots,U_r]$. For each pair of element $s,t \in S$ with $m_{st} \neq \infty$, the least common multiple $[s,t]$ \d{=} $prod(s,t;m_{st})$. If $m_{st} = \infty$, then $s$ and $t$ have no common multiple.
\end{itemize}

 \subsection{LCM-homomorphisms and dihedral foldings}
  \label{sec:3}
  
  The majority of definitions and results from this subsection are due to Crisp. See \cite{Cr} for more details. 

\begin{defn} \label{def lcm}
An Artin monoid homomorphism $\phi : \mathcal{A}^+(\Gamma) \rightarrow \mathcal{A}^+(\Gamma')$ respects lcms if
\begin{enumerate}
 \item $\phi(s) \neq 1$ for each generator $s$, and 
 \item For each pair of generators $s,t \in S$, the pair $\phi(s), \phi(t)$ have a common multiple only if $s$ and $t$ do. In that case,       $[\phi(s),\phi(t)]$ \d{=} $\phi([s,t])$.
\end{enumerate}
\end{defn}

\begin{thm}[Crisp] \label{cr1}
A homomorphism $\phi : \mathcal{A}^+(\Gamma) \rightarrow \mathcal{A}^+(\Gamma')$ between Artin monoids which respects lcms is injective.
\end{thm}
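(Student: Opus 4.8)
The plan is to reduce the injectivity of $\phi$ to a single divisibility‑transfer statement and then establish that statement by induction on word length. Call a positive word $U$ a \emph{left divisor} of $V$, written $U \mid V$, if $V = UW$ for some $W \in \mathcal{A}^+(\Gamma)$. I would aim for the following \emph{Claim}: for all positive words $U,V \in \mathcal{A}^+(\Gamma)$, if $\phi(U)\mid\phi(V)$ then $U\mid V$. Granting the Claim, injectivity is immediate: the presentation $(\ast)$ is homogeneous — both sides of every defining relation have equal length — so word length descends to a monoid homomorphism $\mathcal{A}^+(\Gamma)\to\mathbb{N}$ and $1$ is the only unit; if $\phi(U)=\phi(V)$ in $\mathcal{A}^+(\Gamma')$ then $\phi(U)\mid\phi(V)$ and $\phi(V)\mid\phi(U)$, so the Claim gives $U\mid V$ and $V\mid U$, and writing $V=UX$, $U=VY$, substituting, and left‑cancelling $U$ (the Artin monoid is cancellative) yields $1=XY$, hence $X=Y=1$ by homogeneity and $U=V$.

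I would prove the Claim by induction on $|V|$, reducing first to the case where $U=s$ is a single generator. Write $U=sU'$. From $\phi(U)=\phi(s)\phi(U')\mid\phi(V)$ we get $\phi(s)\mid\phi(V)$; the single‑generator case (the Sub‑claim below) then gives $s\mid V$, say $V=sV'$ with $|V'|=|V|-1$. Left‑cancelling $\phi(s)$ from $\phi(s)\phi(U')\mid\phi(s)\phi(V')$ gives $\phi(U')\mid\phi(V')$, and the induction hypothesis applied to the shorter word $V'$ yields $U'\mid V'$, hence $U=sU'\mid sV'=V$. So everything rests on the \emph{Sub‑claim}: if $s\in S$ and $\phi(s)\mid\phi(V)$, then $s\mid V$. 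This I would likewise prove by induction on $|V|$, allowed to invoke the full Claim for words strictly shorter than $V$; formally the two are handled by a single induction on $|V|$, establishing the Sub‑claim at length $n$ before the Claim at length $n$.

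The crux — and the step I expect to be the main obstacle — is the inductive step of the Sub‑claim. Write $V=tV_2$ with $t\in S$; if $t=s$ we are done, so assume $t\neq s$. Both $\phi(s)$ and $\phi(t)$ left‑divide $\phi(V)$, so the pair $\phi(s),\phi(t)$ has a common multiple, and by the Brieskorn--Saito facts recalled above the least common multiple $[\phi(s),\phi(t)]$ exists and left‑divides $\phi(V)$. Precisely here the hypothesis that $\phi$ respects lcms enters: it forces $[s,t]$ to exist and $[\phi(s),\phi(t)]=\phi([s,t])=\phi(prod(s,t;m_{st}))$, so $\phi([s,t])\mid\phi(V)$. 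Since $m_{st}\geq 2$, factor $[s,t]=tC$ where $C=prod(s,t;m_{st}-1)$ begins with the letter $s$. Left‑cancelling $\phi(t)$ from $\phi(t)\phi(C)=\phi([s,t])\mid\phi(V)=\phi(t)\phi(V_2)$ gives $\phi(C)\mid\phi(V_2)$; as $|V_2|<|V|$, the induction hypothesis (the Claim applied to $V_2$) gives $C\mid V_2$, whence $[s,t]=tC\mid tV_2=V$, and finally $s\mid[s,t]\mid V$ because $[s,t]=prod(s,t;m_{st})$ also begins with $s$. This closes the induction.

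It remains only to dispatch the base case $|V|=0$: by condition (1) of respecting lcms, $\phi(s)\neq 1$ for every generator $s$, so $|\phi(W)|\geq|W|$ for all $W$, whence $\phi(U)\mid 1$ forces $U=1$ (and the Sub‑claim for the empty word is vacuous). The argument uses no normal‑form machinery beyond the stated properties of Artin monoids — cancellativity, and the existence of a least common multiple $[s,t]=prod(s,t;m_{st})$ whenever $s$ and $t$ admit a common multiple, dividing every such multiple \cite{BS} — together with one application of the lcm‑respecting axiom to pull the defining relation of $\Gamma$ back along $\phi$.
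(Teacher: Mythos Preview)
The paper does not actually prove this theorem: it is stated as a result of Crisp and cited to \cite{Cr}, with no argument given. So there is no ``paper's own proof'' to compare against.

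That said, your argument is correct. The simultaneous induction on $|V|$ --- establishing the single-generator divisibility transfer (Sub-claim) at length $n$, then the general divisibility transfer (Claim) at length $n$ --- is well-founded, and the crux step is handled cleanly: from $\phi(s)\mid\phi(V)$ and $\phi(t)\mid\phi(V)$ you get a common multiple of $\phi(s),\phi(t)$, invoke condition~(2) of Definition~\ref{def lcm} to force $m_{st}<\infty$ and $\phi([s,t])=[\phi(s),\phi(t)]\mid\phi(V)$, peel off $\phi(t)$ by cancellativity, and descend to $V_2$. The deduction of injectivity from the Claim via homogeneity and cancellativity is also fine. This is in fact essentially the argument Crisp gives in \cite{Cr}; your write-up recovers it faithfully.
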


\begin{thm}[Crisp] \label{cr2}
If $\phi : \mathcal{A}^+(\Gamma) \rightarrow \mathcal{A}^+(\Gamma')$ is a monomorphism between finite type Artin monoids, then the induced homomorphism $\phi_A : \mathcal{A}(\Gamma) \rightarrow \mathcal{A}(\Gamma')$ between Artin groups is injective. 
\end{thm}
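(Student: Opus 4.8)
\medskip
\noindent\textbf{Proof proposal.}
The plan is to use the special structure of finite type Artin monoids and groups — following Crisp \cite{Cr}, with the underlying facts due to Brieskorn--Saito \cite{BS} — to reduce a would-be kernel element of $\phi_A$ to a comparison of positive words, where the injectivity of $\phi$ is already available by hypothesis. The key structural point is that a finite type Artin group is the group of fractions of its Artin monoid.

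First I would record that structural input. Since $\Gamma$ has finite type, every $m_{st}$ is finite, so the full generating set $S$ has a least common multiple in $\mathcal{A}^+(\Gamma)$ (the Garside element $\Delta_\Gamma$); together with cancellativity this yields the Ore condition, and hence $\mathcal{A}^+(\Gamma)$ embeds in $\mathcal{A}(\Gamma)$ in such a way that every $g \in \mathcal{A}(\Gamma)$ can be written $g = a^{-1}b$ with $a,b \in \mathcal{A}^+(\Gamma)$; the same holds for $\Gamma'$. I would also note that $\phi_A$ is well defined: composing $\phi$ with the inclusion $\mathcal{A}^+(\Gamma') \hookrightarrow \mathcal{A}(\Gamma')$ is a monoid homomorphism into a group, and since $\mathcal{A}(\Gamma)$ is obtained from the monoid presentation $(\ast)$ by passing to the group with the same relations, this extends uniquely to a group homomorphism $\phi_A : \mathcal{A}(\Gamma) \to \mathcal{A}(\Gamma')$ satisfying $\phi_A(a^{-1}b) = \phi(a)^{-1}\phi(b)$.

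For injectivity, take $g \in \ker \phi_A$ and write $g = a^{-1}b$ with $a,b \in \mathcal{A}^+(\Gamma)$. Then $\phi(a)^{-1}\phi(b) = \phi_A(g) = 1$ in $\mathcal{A}(\Gamma')$, so $\phi(a) = \phi(b)$ there. But $\phi(a)$ and $\phi(b)$ are positive words, and since $\Gamma'$ has finite type the monoid $\mathcal{A}^+(\Gamma')$ injects into $\mathcal{A}(\Gamma')$; hence $\phi(a) \d{=} \phi(b)$ already in $\mathcal{A}^+(\Gamma')$. Since $\phi$ is a monomorphism of monoids, $a \d{=} b$ in $\mathcal{A}^+(\Gamma)$, so $a = b$ in $\mathcal{A}(\Gamma)$ and therefore $g = a^{-1}b = 1$. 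Thus $\phi_A$ is injective.

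The real obstacle is the structural input of the second step: proving that a finite type Artin group is the group of fractions of its Artin monoid (equivalently, that the monoid is Ore) is the substantive fact, and it is precisely there that finiteness of the labels $m_{st}$, via the Garside element $\Delta_\Gamma$, enters. A smaller point needing care is consistency of the left- versus right-fraction convention, so that both the form $g = a^{-1}b$ and the step $\phi(a)^{-1}\phi(b) = 1 \Rightarrow \phi(a) = \phi(b)$ sit on the same side; this is routine once fixed at the outset.
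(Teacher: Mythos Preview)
Your argument is correct and is precisely the standard one: use that a finite type Artin monoid is cancellative and Ore (via the Garside element), so its Artin group is its group of fractions and the monoid embeds; then reduce injectivity of $\phi_A$ to injectivity of $\phi$ on positive words. Note, however, that the paper does not supply its own proof of this theorem---it is quoted as a result of Crisp \cite{Cr}, with the underlying monoid facts attributed to Brieskorn--Saito \cite{BS}---so there is nothing in the paper to compare against beyond observing that your proof is exactly the intended one and relies only on the structural facts the paper already lists.
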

 
Let $\mathcal{A}^+(\Gamma)$ be an Artin monoid with generating set $S$. If $T \subseteq S$ has a common multiple, we denote its least common multiple by $\Delta_T$. $\Delta_T$ is also called the fundamental element for $T$. It is shown in \cite{BS} that $\Delta_T$ exists if and only if the parabolic subgroup $W_T$ (ie the subgroup of $W$ generated by $T$) is finite.

\begin{defn} \label{lcm hom} 
Let $(W,S)$ and $(W',S')$ be Coxeter systems of types $\Gamma$ and $\Gamma'$ respectively, and assume $m_{st} \neq \infty$ for all $s,t \in S$. Let $\{T(s) | s \in S \}$ be a collection of mutually disjoint subsets of $S'$ such that
\begin{enumerate}
 \item for each $s \in S$, $T(s)$ is nonempty and $\Delta_{T(s)}$ exists, and 
 \item $prod(\Delta_{T(s)},\Delta_{T(t)};m_{st})$ \d{=} $prod(\Delta_{T(t)},\Delta_{T(s)};m_{ts})$ \d{=} $[\Delta_{T(s)},\Delta_{T(t)}]$ for all $s,t \in S$.
\end{enumerate}
\noindent Define a homomorphism $\phi_T : \mathcal{A}^+(\Gamma) \rightarrow \mathcal{A}^+(\Gamma')$ by $\phi_T(s)$ \d{=} $\Delta_{T(s)}$ for $s \in S$. Such a homomorphism is called an LCM-homomorphism.
\end{defn}
 \medskip
\noindent It is clear from condition $2$ that $\phi_T$ is a homomorphism (For each relation $R$ in $\mathcal{A}^+(\Gamma)$, $\phi_T(R)$ is a relation in $\mathcal{A}^+(\Gamma')$). Additionally, $\phi_T$ respects lcms. Indeed, condition 1 of Definition \ref{def lcm} is satisfied because $T(s) \neq \emptyset$ consists of generators of $S'$. As such $\Delta_{T(s)} \neq 1$. Moreover, the assumption $m_{st} \neq \infty$ for all $s,t$ guarantees the existence of $[s,t]$ for all $s,t$. Also, condition 2 of Definition \ref{lcm hom} implies the second condition of Definition \ref{def lcm}. Since LCM-homomorphisms respect lcms, they are injective by Theorem \ref{cr1}.  
 \smallskip
 
Let $(W,S)$ be an irreducible Coxeter system (ie its Coxeter graph $\Gamma$ is connected), with $S = \{ s_1,s_2,\cdots,s_n \}$. A \textbf{Coxeter element} $h$ of $W$ is defined to be a product $s_{\sigma(1)} s_{\sigma(2)} \cdots s_{\sigma(n)}$, where $\sigma \in \Sigma_n$. It is known that all Coxeter elements are conjugate in $W$ (See p.74 in \cite{H}). Hence, all the Coxeter elements have the same order in $W$. Consequently, the Coxeter number of $W$ is defined to be the order of a Coxeter element. It is well known that the Coxeter graphs $A_n$, $D_n$, and $I_2(n)$ have Coxeter numbers $n+1$, $2n-2$, and $n$ respectively (See \cite{H}).
 
\begin{defn} \label{dihedral folding}
Let $\Gamma$ and $\Gamma'$ be Coxeter graphs with respective vertex sets $S$ and $S'$. A dihedral folding of $\Gamma'$ onto $\Gamma$ is a surjective simplicial map $f : \Gamma' \rightarrow \Gamma$ such that for every edge $\epsilon$ between $s$ and $t$ (labeled $m > 3$) in $\Gamma$, the restriction $f_{\epsilon}$ of $f$ to $f^{-1}(\epsilon)$ is described as follows:
 \medskip

\noindent The preimage $f^{-1}(\epsilon)$ is an irreducible finite type Coxeter graph $K$ with Coxeter number $m > 3$ (Note that this $m$ is the same as the one above). Choose a partition $K_s \cup K_t$ of the vertex set of $K$ so that there are no edges between the vertices in $K_s$ and no edges between the vertices in $K_t$. Then the dihedral folding $f_{\epsilon} : K \rightarrow \epsilon$ is the unique simplicial map such that $f(K_s) = s$ and $f(K_t) = t$. One may always choose such a partition, and it is unique up to relabeling of the two sets.
\end{defn}

Since $f_{\epsilon}$ above, depends on the choice of labeling the partition of $K$ into $K_s \cup K_t$, there are possibly two distinct foldings of $\Gamma'$ onto $\Gamma$. To distinguish them, these foldings are denoted $(K,+\epsilon)$ and $(K,-\epsilon)$. 

\begin{thm}[Crisp] \label{cr3} A dihedral folding $f : \Gamma' \rightarrow \Gamma$ induces an LCM-homomorphism $\phi^f : \mathcal{A}^+(\Gamma) \rightarrow \mathcal{A}^+(\Gamma')$ defined by $\phi^f(s)$ \d{=} $\Delta_{f^{-1}(s)}$ for $s \in S$.
\end{thm}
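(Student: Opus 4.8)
The plan is to recognise $\phi^f$ as the LCM-homomorphism $\phi_T$ of Definition~\ref{lcm hom} associated to the family of fibres $T(s) := f^{-1}(s)$, $s \in S$. Once the $T(s)$ are shown to be mutually disjoint and to satisfy conditions~1 and~2 of that definition, the discussion following it tells us that $\phi_T$ is a well-defined monoid homomorphism which respects lcms --- and that is exactly the assertion that $\phi^f$ is an LCM-homomorphism (injectivity then being a bonus via Theorem~\ref{cr1}). I would also note at the start that a dihedral folding has no $\infty$-labelled edge in its target $\Gamma$, since the preimage of such an edge would have to be a finite-type Coxeter graph of infinite Coxeter number; hence $m_{st} \neq \infty$ for all $s,t \in S$ and the standing hypothesis of Definition~\ref{lcm hom} holds. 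Everything thus reduces to checking disjointness and conditions~1 and~2.

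Disjointness is immediate: the $T(s)$ are the fibres of the vertex map of $f$. Nonemptiness of each $T(s)$ is precisely the surjectivity of $f$. For condition~1 I would first observe that each fibre $f^{-1}(s)$ carries no edge of $\Gamma'$: an edge of $\Gamma'$ joining two vertices of $f^{-1}(s)$ would lie in $f^{-1}(\epsilon)$ for every edge $\epsilon$ of $\Gamma$ incident to $s$, hence in the part $K_s$ of the prescribed partition of $f^{-1}(\epsilon)$, contradicting the requirement that $K_s$ span no edges (isolated vertices of $\Gamma$ do not arise in the cases we need). Consequently $W_{T(s)} \cong (\mathbb{Z}/2)^{|T(s)|}$ is finite, so $\Delta_{T(s)}$ exists, and in fact $\Delta_{T(s)}$ \d{=} $\prod_{v \in f^{-1}(s)} v$, the product of the pairwise commuting generators in the fibre --- which is the value $\Delta_{f^{-1}(s)}$ prescribed for $\phi^f(s)$.

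The substance is condition~2, which I would split on $m_{st}$. If $m_{st} = 2$, then $s$ and $t$ are non-adjacent in $\Gamma$, so $\Gamma'$ has no edge between $f^{-1}(s)$ and $f^{-1}(t)$ --- such an edge would map under the simplicial map $f$ onto a non-existent edge of $\Gamma$. Hence $T(s) \cup T(t)$ spans an edgeless (so finite-type) parabolic, $\Delta_{T(s)}\Delta_{T(t)}$ \d{=} $\Delta_{T(t)}\Delta_{T(s)}$, and in the free commutative Artin monoid on this vertex set that product over the union is the least common multiple $[\Delta_{T(s)}, \Delta_{T(t)}]$. If $m_{st} = m \geq 3$, let $\epsilon$ be the edge of $\Gamma$ between $s$ and $t$ and put $K := f^{-1}(\epsilon)$: an irreducible finite-type Coxeter graph of Coxeter number $m$ with vertex set $T(s) \sqcup T(t)$ and prescribed bipartition $K_s = T(s)$, $K_t = T(t)$, both edgeless. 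Then $\Delta_{T(s)} = \Delta_{K_s}$, $\Delta_{T(t)} = \Delta_{K_t}$, and I would invoke the classical description of the fundamental element of a finite-type Artin monoid as the $m$-fold alternating product of the two halves of its bipartition: $prod(\Delta_{K_s}, \Delta_{K_t}; m)$ \d{=} $prod(\Delta_{K_t}, \Delta_{K_s}; m)$ \d{=} $\Delta_K$ (this is the very mechanism behind Theorems~\ref{bdrelthm1} and~\ref{bdrelthm2}; it follows from \cite{BS} and \cite{H} because the bipartite Coxeter element $\sigma_s\sigma_t$ of $W_K$ has order $m$, $prod(\sigma_s,\sigma_t;m)$ is the longest element $w_0$ of $W_K$, and any positive word of length $|\Phi^+(K)| = \ell(w_0)$ representing $w_0$ is positively equal to $\Delta_K$). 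Since $\Delta_K$ is the lcm of all the generators of $K$, it is in particular $[\Delta_{K_s}, \Delta_{K_t}]$, which gives condition~2.

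I expect the one genuinely non-formal step to be this identity $prod(\Delta_{K_s}, \Delta_{K_t}; m)$ \d{=} $\Delta_K$, with exactly $m$ alternating factors. The length bookkeeping --- that the word has length $m|K|/2 = |\Phi^+(K)|$, which among irreducible finite types forces $|K_s| = |K_t|$ precisely when $m$ is odd (the only such cases being types $A_{2r}$ and $I_2(m)$ with $m$ odd, where the bipartition is automatically balanced) --- is routine. The real content is that the image of this word in $W_K$ is $w_0$, equivalently $(\sigma_s\sigma_t)^{m/2} = w_0$ when $m$ is even and $(\sigma_s\sigma_t)^{(m-1)/2}\sigma_s = w_0$ when $m$ is odd; this draws on the structure theory of finite Coxeter groups, in particular that finite-type diagrams are forests so that the edgeless bipartition $K_s \sqcup K_t$ exists and is unique. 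A minor secondary point to keep in view is that a single fibre $T(s)$ may meet several edge-preimages $f^{-1}(\epsilon)$; but since $T(s)$ coincides with the $K_s$-part for each such $\epsilon$, and distinct fibres commute wherever $s$ and $t$ are non-adjacent in $\Gamma$, the per-edge analysis is all that is needed.
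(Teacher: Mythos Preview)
The paper does not give its own proof of this theorem: it is stated as a result of Crisp and cited to \cite{Cr}, so there is no in-paper argument to compare against. Your outline is a faithful reconstruction of what such a proof must contain --- namely, checking that the fibres $T(s)=f^{-1}(s)$ meet the hypotheses of Definition~\ref{lcm hom} --- and you have correctly isolated the one substantive step, the identity $prod(\Delta_{K_s},\Delta_{K_t};m)$ \d{=} $\Delta_K$ for the irreducible finite-type graph $K=f^{-1}(\epsilon)$, together with the Coxeter-theoretic reason (the bipartite Coxeter element has order $m$ and the alternating word of length $m$ represents $w_0$) behind it.

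Two small points worth tightening. First, the paper's Definition~\ref{dihedral folding} only constrains $f^{-1}(\epsilon)$ for edges $\epsilon$ with label $m>3$; your argument in the case $m_{st}=3$ tacitly assumes the same structure there, which the stated definition does not guarantee. In Crisp's original the hypotheses are arranged so that this issue does not arise, and in every application in the present paper $\Gamma=I_2(k)$ has a single edge of label $k\geq 3$, so no harm is done --- but it is worth flagging. Second, your argument that each fibre $f^{-1}(s)$ is edgeless can be said more directly: an edge of $\Gamma'$ with both endpoints in $f^{-1}(s)$ is collapsed by the simplicial map $f$ to the vertex $s$, hence lies in $f^{-1}(\epsilon)$ for any edge $\epsilon$ at $s$, hence in $K_s$, contradicting the bipartition requirement; your phrasing ``would lie in $f^{-1}(\epsilon)$ for every edge $\epsilon$'' is correct but could be misread as claiming the edge maps onto $\epsilon$.
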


\begin{cor}\label{corollary to cr3}
Let $\Gamma(h)$ be the Coxeter graph corresponding to an irreducible finite type Coxeter group with Coxeter number $h$. Then the dihedral folding of $\Gamma(h)$ onto $I_2(h)$ defines an embedding $\mathcal{A}^+(I_2(h)) \rightarrow \mathcal{A}^+(\Gamma(h))$ between the Artin monoids. By Theorem~\ref{cr2}, there is an embedding between the corresponding Artin groups. 
\end{cor}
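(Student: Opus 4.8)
The plan is to exhibit the dihedral folding explicitly and then let the results quoted above do the rest. Write $S'$ for the vertex set of $\Gamma(h)$, and let $I_2(h)$ have vertices $s,t$ joined by the single edge $\epsilon$ labeled $h$. Assume first that $h \geq 4$, so that $\epsilon$ carries a label $m = h > 3$ as Definition~\ref{dihedral folding} requires; the case $h = 3$ is degenerate, since then $\Gamma(3) = A_2 = I_2(3)$ and one takes the identity folding, whose induced homomorphism is the identity on $\mathcal{A}^+(A_2)$. The first step is to choose, as Definition~\ref{dihedral folding} permits, a partition $S' = K_s \cup K_t$ with no edge internal to $K_s$ and none internal to $K_t$. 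Such a partition exists because an irreducible finite type Coxeter graph is a tree, hence bipartite, and it is unique up to interchanging the two parts.

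Next I would set $f : \Gamma(h) \to I_2(h)$ to be the simplicial map sending every vertex of $K_s$ to $s$ and every vertex of $K_t$ to $t$. Since the $2$-coloring $K_s \cup K_t$ has no monochromatic edge, each edge of $\Gamma(h)$ runs between the two parts and is therefore carried onto $\epsilon$; thus $f$ is a well-defined surjective simplicial map. To confirm that $f$ is a dihedral folding, note that $I_2(h)$ has the single edge $\epsilon$ and that $f^{-1}(\epsilon) = f^{-1}(\{s,t\}) = S'$, so the preimage of $\epsilon$ is all of $\Gamma(h)$, an irreducible finite type Coxeter graph with Coxeter number $h = m$ by hypothesis. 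The partition $K_s \cup K_t$ is precisely a partition of the vertex set of this preimage with no internal edges, and $f$ restricted to it is the unique simplicial map sending $K_s$ to $s$ and $K_t$ to $t$. Hence $f$ is a dihedral folding of $\Gamma(h)$ onto $I_2(h)$, namely one of the two foldings $(\Gamma(h), \pm\epsilon)$.

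It then remains to push the construction through the theorems already quoted. By Theorem~\ref{cr3}, $f$ induces an LCM-homomorphism $\phi^f : \mathcal{A}^+(I_2(h)) \to \mathcal{A}^+(\Gamma(h))$ with $\phi^f(s)$ \d{=} $\Delta_{K_s}$ and $\phi^f(t)$ \d{=} $\Delta_{K_t}$ (these fundamental elements exist since $W_{K_s}$ and $W_{K_t}$, being products of copies of $\mathbb{Z}/2\mathbb{Z}$, are finite). As noted after Definition~\ref{lcm hom}, an LCM-homomorphism respects lcms, so Theorem~\ref{cr1} shows $\phi^f$ is injective; this is the claimed embedding $\mathcal{A}^+(I_2(h)) \to \mathcal{A}^+(\Gamma(h))$ of Artin monoids. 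Finally, both $I_2(h)$ and $\Gamma(h)$ are of finite type, so Theorem~\ref{cr2} upgrades $\phi^f$ to an injective homomorphism $\phi^f_A : \mathcal{A}(I_2(h)) \to \mathcal{A}(\Gamma(h))$ of Artin groups, which is exactly the asserted embedding.

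The only step carrying real content is the construction and verification of $f$: one needs the bipartiteness of $\Gamma(h)$ (where the classification of finite type Coxeter graphs as trees enters) and the observation that the preimage of $\epsilon$ is all of $\Gamma(h)$, so that its Coxeter number automatically matches the label on $\epsilon$. Everything after that is a direct application of Crisp's Theorems~\ref{cr3}, \ref{cr1}, and \ref{cr2}; the one point to flag carefully is the edge case $h = 3$, since Definition~\ref{dihedral folding} only prescribes the folding on edges labeled strictly greater than $3$.
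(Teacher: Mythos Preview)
Your proof is correct and follows precisely the route the paper intends: the corollary is stated without proof because it is meant to be an immediate consequence of Theorem~\ref{cr3} (a dihedral folding yields an LCM-homomorphism), the remark after Definition~\ref{lcm hom} (LCM-homomorphisms respect lcms), Theorem~\ref{cr1} (hence injective on monoids), and Theorem~\ref{cr2} (hence injective on groups, both types being finite). You have simply filled in the explicit construction of the bipartition $K_s \cup K_t$ via the tree structure of irreducible finite type graphs and flagged the degenerate case $h=3$, both of which the paper leaves implicit.
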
 

The following lemma (Reduction Lemma) is due to Brieskorn and Saito. It is proved in \cite{BS}.

\begin{lem}[Reduction Lemma] \label{red lem} 
Let $(W,S)$ be a Coxeter system with Coxeter graph $\Gamma$. If $X,Y \in \mathcal{A}^+(\Gamma)$ and $s,t \in S$ satisfy $sX$ \d{=} $tY$, then $\exists \hspace{0.1cm} W \in \mathcal{A}^+(\Gamma)$ such that
\begin{center}
$X$ \d{=} $prod(t,s;m_{st}-1)W$ and $Y$ \d{=} $prod(s,t;m_{st}-1)W$
\end{center}
\end{lem}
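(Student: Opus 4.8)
The plan is to deduce the Reduction Lemma directly from the three structural facts about Artin monoids recalled above: cancellativity; the fact that any finite subset of $\mathcal{A}^+(\Gamma)$ which admits a common multiple has a unique least common multiple, and that this least common multiple divides every common multiple; and the identity $[s,t]$ \d{=} $prod(s,t;m_{st})$, valid whenever $m_{st} \neq \infty$. I would first dispose of the degenerate case $s = t$, where $m_{ss} = 1$ and both $prod(t,s;m_{st}-1)$ and $prod(s,t;m_{st}-1)$ are the empty word: cancelling the leading $s$ from $sX$ \d{=} $sY$ gives $X$ \d{=} $Y$, and $W = X$ works.

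So suppose $s \neq t$. The word $sX$ (which equals $tY$) is left-divisible by $s$, trivially, and by $t$, since $sX$ \d{=} $tY$; hence $\{s,t\}$ has a common multiple, and in particular $m_{st} \neq \infty$, for otherwise $s$ and $t$ would admit no common multiple at all. Consequently the least common multiple $[s,t]$ exists, it left-divides $sX$, and it equals $prod(s,t;m_{st})$ in $\mathcal{A}^+(\Gamma)$. Choose $W \in \mathcal{A}^+(\Gamma)$ with $sX$ \d{=} $prod(s,t;m_{st})\,W$. Since $prod(s,t;m_{st}) = s\cdot prod(t,s;m_{st}-1)$, cancelling the leading $s$ from both sides yields $X$ \d{=} $prod(t,s;m_{st}-1)\,W$. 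For the companion identity I would apply the defining braid relation $prod(s,t;m_{st})$ \d{=} $prod(t,s;m_{st})$ together with $prod(t,s;m_{st}) = t\cdot prod(s,t;m_{st}-1)$: then $tY$ \d{=} $sX$ \d{=} $prod(s,t;m_{st})\,W$ \d{=} $prod(t,s;m_{st})\,W = t\cdot prod(s,t;m_{st}-1)\,W$, and cancelling the leading $t$ gives $Y$ \d{=} $prod(s,t;m_{st}-1)\,W$ with the very same $W$, which is exactly the assertion.

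The only genuine subtlety --- and the step I would be most careful about --- is the logical order of the ingredients rather than any computation. In Brieskorn--Saito's own development the Reduction Lemma is established \emph{before} the general theory of least common multiples and is in fact one of the tools used to build it, so a fully self-contained argument cannot borrow the identity $[s,t]$ \d{=} $prod(s,t;m_{st})$ and must instead proceed by induction on $|X| + |Y|$, repeatedly rewriting the front of $sX$ \d{=} $tY$ by a single defining relation and keeping track of how the prefixes $prod(s,t;\cdot)$ and $prod(t,s;\cdot)$ lengthen. Since the excerpt already lists the lcm facts as available, the short argument above is legitimate as stated; if one wanted the inductive version instead, the main obstacle would be the bookkeeping of the case analysis --- deciding which elementary relation is applied at the front, and checking that after cancelling a common prefix the inductive hypothesis still applies to strictly shorter words.
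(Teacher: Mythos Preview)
The paper does not actually prove this lemma: it simply attributes it to Brieskorn--Saito and cites \cite{BS}. So there is no in-paper argument to compare against.

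Your derivation is correct as written. Given the bulleted facts the paper has already imported from \cite{BS} --- cancellativity, existence and divisibility of least common multiples, and the identity $[s,t]$ \d{=} $prod(s,t;m_{st})$ for $m_{st} \neq \infty$ --- the argument is clean: $sX$ \d{=} $tY$ exhibits a common left multiple of $s$ and $t$, forcing $m_{st} < \infty$ and $prod(s,t;m_{st}) \mid sX$, after which two left cancellations give both conclusions with the same $W$. Your handling of the degenerate case $s=t$ is also fine.

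Your closing caveat is exactly right and worth emphasizing. In Brieskorn--Saito's own treatment the logical order is reversed: the Reduction Lemma is proved first, by a direct induction on word length analyzing which elementary relation can be applied at the leftmost position, and the lcm theory (including $[s,t]$ \d{=} $prod(s,t;m_{st})$) is then built on top of it. So relative to the primary source your short proof is circular; relative to the present paper's exposition, which lists the lcm facts as black boxes before stating the Reduction Lemma, it is a legitimate and efficient deduction. If you wanted a self-contained argument in the spirit of \cite{BS}, the inductive bookkeeping you sketch in your last paragraph is indeed the route, and the main labor is the case analysis on the first rewriting step rather than any conceptual difficulty.
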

 
\begin{lem} \label{lcm lem}
Let $(W,S)$ be a finite type Coxeter system with Coxeter graph $\Gamma$. If $T = \{ t_1,\cdots,t_k \}$ is a subset of $S$ consisting of pairwise commuting generators, then the least common multiple ,$\Delta_T$, of $T$ in $\mathcal{A}^+(\Gamma)$ exists and is given by $t_{\sigma (1)} t_{\sigma (2)} \cdots t_{\sigma (k)}$, $\sigma \in \Sigma_k$
\end{lem}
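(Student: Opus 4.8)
The plan is to exhibit the positive word $\Delta := t_1 t_2 \cdots t_k$ as the least common multiple of $T$ by verifying the two defining properties directly: that $\Delta$ is a common multiple of $T$, and that $\Delta$ left-divides every common multiple of $T$. Since $(W,S)$ is of finite type, the parabolic subgroup $W_T$ is finite, so $\Delta_T$ exists by \cite{BS}; thus the only real content of the lemma is the explicit formula, and producing a word $\Delta$ enjoying the two lcm properties simultaneously proves existence and identifies it.

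First I would observe that $\Delta$ is well-defined independently of the ordering: since the $t_i$ pairwise commute we have $m_{t_i t_j} = 2$ for all $i \neq j$, and any two orderings $t_{\sigma(1)} \cdots t_{\sigma(k)}$ and $t_{\tau(1)} \cdots t_{\tau(k)}$ are related by a sequence of adjacent transpositions, each of which is an instance of a defining relation $t_a t_b$ \d{=} $t_b t_a$ of the monoid $\mathcal{A}^+(\Gamma)$. That $\Delta$ is a common multiple of $T$ is then immediate: for any fixed $i$, reordering so that $t_i$ appears first exhibits $\Delta$ \d{=} $t_i (t_{\sigma(2)} \cdots t_{\sigma(k)})$, so $t_i \mid \Delta$.

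The heart of the argument is to show that $\Delta \mid V$ for an arbitrary common multiple $V$ of $T$, and I would proceed by induction on $k$, the case $k = 1$ being trivial. For $k \geq 2$: since $t_1 \mid V$, write $V$ \d{=} $t_1 V_1$. For each $j \geq 2$, both $t_1$ and $t_j$ divide $V$, so by the basic facts on lcms recalled in Section~\ref{sec:2} their least common multiple $[t_1,t_j]$ \d{=} $prod(t_1,t_j;2) = t_1 t_j$ divides $V$; writing $V$ \d{=} $t_1 t_j Z_j$ and cancelling $t_1$ on the left — legitimate because $\mathcal{A}^+(\Gamma)$ is cancellative — gives $V_1$ \d{=} $t_j Z_j$, that is $t_j \mid V_1$. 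Hence $V_1$ is a common multiple of $\{t_2,\dots,t_k\}$, a set of $k-1$ pairwise-commuting generators, so by the induction hypothesis $t_2 \cdots t_k \mid V_1$, say $V_1$ \d{=} $t_2 \cdots t_k Z$. Then $V$ \d{=} $t_1 V_1$ \d{=} $t_1 t_2 \cdots t_k Z$ \d{=} $\Delta Z$, so $\Delta \mid V$, which closes the induction. Combining the two properties, $\Delta$ is the least common multiple $\Delta_T$.

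I do not anticipate a serious obstacle; the single point that needs care is the left-cancellation used to pass from $V$ to $V_1$, which is exactly the cancellativity of the Artin monoid and would fail in a general monoid. As an alternative to invoking the $[t_1,t_j]$ fact, one could apply the Reduction Lemma~\ref{red lem} to an equality $t_1 V_1$ \d{=} $t_2 V_2$ with $m_{t_1 t_2} - 1 = 1$ to obtain $V$ \d{=} $t_1 t_2 W'$ directly and then iterate; either route reduces the statement for $k$ generators to the same statement for $k-1$ generators.
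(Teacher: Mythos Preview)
Your proof is correct and follows essentially the same approach as the paper's: both set $\alpha = t_1\cdots t_k$, observe it is a common multiple, and then show it left-divides any common multiple $\beta$ by successively peeling off the $t_i$'s. The only cosmetic differences are that you package the peeling as an induction on $k$ and invoke the basic fact $[t_1,t_j] \,\d{=}\, t_1t_j$ together with left cancellation, whereas the paper unrolls the iteration explicitly and appeals to the Reduction Lemma~\ref{red lem} at each step --- the very alternative you mention at the end.
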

\begin{proof}
Set $\alpha$ \d{=} $t_1 t_2 \cdots t_k$. Since the $t_i$ pairwise commute, $\alpha$ is a common multiple of $T$. Suppose that $\beta$ is another common multiple of $T$. Then for each $i = 1,\cdots,k$, $\exists \hspace{0.1cm} x_i \in \mathcal{A}^+(\Gamma)$ such that $\beta$ \d{=} $t_i x_i$. In particular, $t_1x_1$ \d{=} $t_{j_1}x_{j_1}$ for all $j_1 \in \{ 2,\cdots,k \}$. By Lemma~\ref{red lem}, $\exists \hspace{0.1cm} W_{1j_1} \in \mathcal{A}^+(\Gamma)$ such that $x_1$ \d{=} $t_{j_1}W_{1j_1}$ for all $j_1$ (note that we used the assumption $m_{t_1 t_{j_1}} = 2$). In particular, $t_2W_{12}$ \d{=} $t_{j_2}W_{1j_2}$ for all $j_2 \in \{ 3,\cdots,k \}$. By Lemma~\ref{red lem}, $\exists \hspace{0.1cm} W_{12j_2} \in \mathcal{A}^+(\Gamma)$ such that $W_{12}$ \d{=} $t_{j_2}W_{12j_2}$ for all $j_2$. In particular, $t_3W_{123}$ \d{=} $t_{j_3}W_{12j_3}$ for all $j_3 \in \{ 4,\cdots,k \}$. By Lemma~\ref{red lem}, $\exists \hspace{0.1cm} W_{123j_3} \in \mathcal{A}^+(\Gamma)$ such that $W_{123}$ \d{=} $t_{j_3}W_{123j_3}$ for all $j_3$. By repeating the same process, one gets $W_{1 2 \cdots r}$ \d{=} $t_{j_r}W_{1 2 \cdots j_r}$ for all $j_r \in \{r+1,\cdots,k \}$, where $r \in \{ 4,\cdots,k \}$. In particular, when $j_2 = 3$, $j_3 = 4$ and $j_r = r+1$ for all $r \in \{ 4,\cdots,k \}$, the following hold equalities hold in 
$\mathcal{A}^+(\Gamma)$:
\begin{eqnarray*}
W_{12} &\d{=}& t_3W_{123} \\
W_{123}&\d{=}& t_4W_{1234} \\
&\vdots& \\ 
W_{1 2 \cdots k-1} &\d{=}& t_{k}W_{1 2 \cdots k}
\end{eqnarray*}
\noindent Hence, $\beta$ \d{=} $t_1 x_1$ \d{=}  $t_1t_2W_{12}$ \d{=} $t_1t_2t_3W_{123}$ \d{=} $\cdots$ \d{=} $t_1 t_2 \cdots t_k W_{1 2 \cdots k}$  \d{=} $\alpha W_{1 2 \cdots k}$. Therefore, $\alpha | \beta$ and $\Delta_T$ \d{=} $\alpha$.
\end{proof}

\section{Geometric homomorphisms}
\label{sec:4}

\begin{fact} \label{comm reln}
Suppose that $a_1$ and $a_2$ are isotopy classes of simple closed curves in an orientable surface $S$. If $i(a_1,a_2) = 0$, then $[T_1,T_2] = 1$. This is called the commutativity or disjointness relation.
\end{fact}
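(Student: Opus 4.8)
The plan is to reduce the statement to the elementary observation that homeomorphisms supported on disjoint subsurfaces commute. First I would use the hypothesis $i(a_1,a_2)=0$ to pass to disjoint representatives. The geometric intersection number is by definition the minimum of $|\alpha\cap\beta|$ taken over all simple closed curves $\alpha$ isotopic to $a_1$ and $\beta$ isotopic to $a_2$ that are in transverse position, and this minimum is realized by curves in minimal position; hence $i(a_1,a_2)=0$ produces representatives $\alpha$ and $\beta$ with $\alpha\cap\beta=\emptyset$. Since Dehn twists depend only on the isotopy classes of the twisting curves, we may replace $a_1,a_2$ by $\alpha,\beta$ without changing $T_1$ or $T_2$.

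Next I would choose disjoint annular neighborhoods. Because $\alpha$ and $\beta$ are disjoint compact $1$-submanifolds of $S$, standard collar/tubular-neighborhood arguments yield closed annular neighborhoods $N_1\supset\alpha$ and $N_2\supset\beta$ with $N_1\cap N_2=\emptyset$. Recall that $T_i$ may be represented by a homeomorphism $\phi_i$ of $S$ that is the identity outside $N_i$, and that the resulting isotopy class does not depend on the chosen annulus. Thus we may take representatives $\phi_1$ of $T_1$ and $\phi_2$ of $T_2$ with $\mathrm{supp}(\phi_i)\subseteq N_i$ and $\phi_i(N_i)=N_i$.

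Finally I would check $\phi_1\phi_2=\phi_2\phi_1$ pointwise. For $x\in N_1$, the map $\phi_2$ is the identity near $x$ and $\phi_1(x)\in N_1$, so $\phi_1\phi_2(x)=\phi_1(x)=\phi_2\phi_1(x)$; symmetrically $\phi_1\phi_2=\phi_2\phi_1$ on $N_2$; and on $S\setminus(N_1\cup N_2)$ both composites are the identity. Hence $\phi_1\phi_2=\phi_2\phi_1$ as homeomorphisms of $S$, and passing to isotopy classes gives $T_1T_2=T_2T_1$, i.e.\ $[T_1,T_2]=1$ in Mod(S).

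There is no genuine obstacle here — this is a basic fact — but the only points requiring care are the two standard inputs hidden above: that $i(a_1,a_2)$ is an honest minimum (so that $i(a_1,a_2)=0$ truly yields \emph{disjoint} representatives, not merely an infimum of zero), and that a Dehn twist defines a well-defined element of Mod(S) independent of the supporting annulus. Granting these, the commutation is the one-line computation just given.
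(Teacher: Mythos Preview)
Your argument is correct and is exactly the standard proof of this well-known fact. The paper itself does not supply a proof here at all: Fact~\ref{comm reln} is simply stated as a known background result (with reference to \cite{FM} implicit), so there is no ``paper's own proof'' to compare against. Your write-up could serve as the missing justification; the two caveats you flag (realizability of $i(a_1,a_2)=0$ by genuinely disjoint representatives, and well-definedness of the Dehn twist class independent of the annulus) are the only subtleties, and both are standard.
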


\begin{fact} \label{braid reln}
Suppose that $a_1$ and $a_2$ are isotopy classes of simple closed curves in an orientable surface $S$. If $i(a_1,a_2) = 1$, then $T_1T_2T_1 = T_2T_1T_2$. This is called the braid relation.
\end{fact}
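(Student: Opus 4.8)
The plan is to deduce the braid relation from two standard ingredients: the \emph{naturality} (change-of-coordinates) formula for Dehn twists, and a single curve computation in a local model surface. Recall the naturality formula: for any orientation-preserving homeomorphism $f$ of $S$ and any simple closed curve $a$, one has $T_{f(a)} = f T_a f^{-1}$ in $\mathrm{Mod}(S)$; this is immediate from the definition of a Dehn twist, since conjugating a twist supported near $a$ by $f$ produces a twist supported near $f(a)$. I would take this as known. The strategy is then to exhibit a mapping class $f$, built out of $T_1$ and $T_2$ themselves, with $f(a_1)=a_2$, so that the naturality formula collapses directly into the desired identity.

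First I would reduce to a standard local model. Since $i(a_1,a_2)=1$, the two curves can be isotoped to meet transversely in a single point, and a closed regular neighborhood $N$ of $a_1 \cup a_2$ is a compact orientable surface that retracts onto the figure-eight $a_1 \vee a_2$, hence has $\chi(N)=-1$ and a single boundary component; thus $N$ is a one-holed torus $S_{1,1}$ with $a_1,a_2$ a dual pair of generating curves. Because $T_1$ and $T_2$ are supported inside $N$, both words $T_1T_2T_1$ and $T_2T_1T_2$ are represented by homeomorphisms supported in $N$, so it suffices to verify the relation for the standard dual pair in $S_{1,1}$; the change-of-coordinates principle carries any two such configurations to one another, and naturality transports the relation to the general case.

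Next I would establish the key claim $T_1T_2(a_1)=a_2$ as an equality of isotopy classes of curves in $S_{1,1}$. The cleanest verification uses the action on $H_1(S_{1,1})\cong\mathbb{Z}^2$, where $T_c$ acts by the transvection $x\mapsto x+\langle x,c\rangle\,c$ with $\langle\,,\,\rangle$ the algebraic intersection form. Taking $a_1,a_2$ as a symplectic basis with $\langle a_1,a_2\rangle=1$, one computes $T_2(a_1)=a_1+a_2$ and then $T_1(a_1+a_2)=a_2$, so $T_1T_2$ sends the class of $a_1$ to that of $a_2$ (up to sign, hence as an unoriented class). Since an essential simple closed curve in the one-holed torus is determined up to isotopy by its primitive homology class, this homological equality upgrades to $T_1T_2(a_1)=a_2$ at the level of isotopy classes.

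Finally I would combine the pieces. Setting $f=T_1T_2$, naturality gives
\[
T_2 \;=\; T_{f(a_1)} \;=\; f\,T_1\,f^{-1} \;=\; T_1T_2\,T_1\,(T_1T_2)^{-1} \;=\; T_1T_2T_1T_2^{-1}T_1^{-1},
\]
and multiplying on the right by $T_1T_2$ yields $T_2T_1T_2=T_1T_2T_1$, which is the braid relation. The main obstacle is the curve-level claim $T_1T_2(a_1)=a_2$: the homology computation is immediate, but one must justify that homology detects the isotopy class, which is the special feature of the one-holed torus that makes the argument clean. If one wished to avoid invoking that fact, the alternative is to verify $T_1T_2(a_1)=a_2$ directly from an explicit picture of the twisted curves in $S_{1,1}$ — elementary, but requiring the images to be drawn and tracked carefully.
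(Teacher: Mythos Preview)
Your argument is correct and is essentially the standard proof (reduce to the one-holed torus, verify $T_1T_2(a_1)=a_2$, then apply the naturality formula $T_{f(a)}=fT_af^{-1}$). There is nothing to compare against, however: the paper does not prove Fact~\ref{braid reln} at all. It is recorded as a known background fact, on the same footing as the disjointness relation (Fact~\ref{comm reln}), and is implicitly referenced to the literature (e.g.\ the Farb--Margalit primer \cite{FM}). So your proposal supplies a proof where the paper simply cites one.

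One small remark on presentation: the step asserting that an essential simple closed curve in $S_{1,1}$ is determined up to isotopy by its (unoriented) primitive homology class is indeed the crux, as you note. If you want the argument to be fully self-contained you should either prove that classification or, as you suggest, replace the homology computation by a direct pictorial verification of $T_1T_2(a_1)=a_2$; the latter is short and avoids any appeal to the classification of curves on the torus.
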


In Facts~\ref{comm reln} and~\ref{braid reln}, $T_1$ and $T_2$ denote Dehn twists along $a_1$ and $a_2$ respectively, and $i(a_1,a_2)$ represents the geometric intersection number between $a_1$ and $a_2$. Note that in the context of this paper, the braid relation of Fact~\ref{braid reln} will be referred to as a Artin relation of length $3$, or a $3$-Artin relation.
   
\begin{defn}\label{curve graph}
Let $\mathcal{C} = \{a_1, \cdots, a_n\}$ be a finite collection of pairwise nonisotopic simple closed curves in $S$. Assume that no two elements in $\mathcal{C}$ cobound a bigon and no three elements have a common point of intersection. To this collection, we associate a graph $\mathcal{CG}$, called the \textbf{curve graph} of the $a_i$, whose vertices are the curves $a_i$. Moreover, two vertices $a_i$ and $a_j$ are joined by an edge, labeled $i(a_i,a_j)$, whenever $i(a_i,a_j) > 0$. If $i(a_i,a_j) = 1$, suppress the label. 
\end{defn}
  
Let $\mathcal{A}(\Gamma)$ be an Artin group of small type. That is, $m_{ij} \leq 3$ for all $i,j$. Let $a_1, \cdots, a_n$ be a collection of simple closed curves in $S$ whose curve graph is isomorphic to $\Gamma$. Since $\Gamma$ is of small type, no two curves in the collection intersect more than once. There is a natural homomorphism $\mathcal{A}(\Gamma) \rightarrow Mod(S)$ mapping the $i^{th}$ generator $\sigma_i$ of $\mathcal{A}(\Gamma)$ to the Dehn twist $T_i$ along $a_i$. That this map is a homomorphism follows immediately from Facts ~\ref{comm reln} and ~\ref{braid reln} and the definitions of Coxeter and curve graphs.

\begin{defn}\label{geometric hom}
A homomorphism $\mathcal{A}(\Gamma) \rightarrow Mod(S)$ is said to be geometric if it maps the standard generators of $\mathcal{A}(\Gamma)$ to Dehn twists in Mod(S). 
\end{defn}
 \medskip
 
\noindent \textbf{Notation} Let $\Gamma$ be a small type Coxeter graph which is a tree. Let $S$ be an arbitrary surface and $a_1,\cdots,a_l$ be simple closed curves in $S$ with $i(a_j,a_k) \in \{0,1\}$, so that the curve graph $\mathcal{CG}$ associated with  $\{a_i\}_{i = 1}^l$ is isomorphic to $\Gamma$. Let $S_{\Gamma}$ be the closure of a regular neighborhood of $\cup_{i=1}^{l}a_i$ in $S$. It is very simple to check that the homeomorphism type of $S_{\Gamma}$ is independent of $S$ or $\{a_i\}$, and only depends on $\Gamma$, justifying the notation $S_{\Gamma}$.

\begin{thm}[Perron-Vannier] \label{pv}
If a curve graph $\mathcal{CG}$ is isomorphic to a Coxeter graph $\Gamma$ of type $A_n$ or $D_n$, then the geometric homomorphism $g:\mathcal{A}(\Gamma) \rightarrow Mod(S_{\Gamma})$ is injective.
\end{thm}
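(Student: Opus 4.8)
The plan is to realize the geometric homomorphism $g$ as, or inside, a homomorphism that is manifestly injective, by combining covering-space theory with the classical identification $\mathcal{A}(A_n)\cong B_{n+1}$ of the type-$A_n$ Artin group with the $(n+1)$-strand braid group (Artin's presentation). In each case $S_\Gamma$ is a regular neighbourhood of a tree-like curve configuration and hence carries a natural involution exhibiting it as a double branched cover of a simpler orbifold; the crux is to identify the images of the chain twists $T_i$ downstairs and to compare mapping class groups via the Birman--Hilden theorem. I will carry out type $A_n$ in full and then explain why type $D_n$ is the real work.

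\emph{Type $A_n$.} Write the chain as $a_1,\dots,a_n$. Realize $S_{A_n}$ as the double cover of a disk $D$ branched over $n+1$ collinear points $p_0,\dots,p_n$, with $a_i$ the preimage of the sub-arc from $p_{i-1}$ to $p_i$; the deck transformation is the hyperelliptic involution $\iota$, and the local model $z\mapsto z^2$ shows that the preimage of the half-twist supported near $[p_{i-1},p_i]$ is exactly the single Dehn twist $T_i$ about the connected curve $a_i$. Thus every $T_i$ is $\iota$-symmetric, so $g$ factors through the symmetric mapping class group $\mathrm{SMod}(S_{A_n})$; by Birman--Hilden the quotient map $\mathrm{SMod}(S_{A_n})\to \mathrm{Mod}(D,\{p_j\})=B_{n+1}$ sends $T_i$ to the $i$-th standard half-twist $\sigma_i$. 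Therefore the composite $\mathcal{A}(A_n)\to \mathrm{Mod}(S_{A_n})\to B_{n+1}$ takes the $i$-th Artin generator to $\sigma_i$, and so, by Artin's theorem, is the identity automorphism of $B_{n+1}\cong \mathcal{A}(A_n)$; in particular $g$ is injective. The only points needing care here are the local branched-cover model and the (mild, automatic for $n\geq 2$) hypotheses of Birman--Hilden.

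\emph{Type $D_n$.} I would attempt the same scheme, but now the natural double cover of $S_{D_n}$ is not the hyperelliptic cover of a disk: the "fork" curves $a_{n-1},a_n$ force the base to be essentially a punctured annulus rather than a punctured disk. Here $a_1,\dots,a_{n-1}$ still span an $A_{n-1}$-subchain, and one would arrange the covering involution so that $T_1,\dots,T_n$ descend to a generating set of the copy of $\mathcal{A}(D_n)$ that sits inside the type-$B$ (annular) braid group $\mathcal{A}(B_n)$ as the explicit index-two "even total winding" subgroup (tom Dieck); $\mathcal{A}(B_n)$ in turn embeds in an honest braid group $B_{n+1}$ by the usual device of adjoining an axis strand, which brings one back to the faithful type-$A$ picture just established. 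The hard part is exactly the bookkeeping of this reduction: pinning down the quotient orbifold of $S_{D_n}$ and the precise images of the $T_i$, verifying that the purely algebraic chain $\mathcal{A}(D_n)\hookrightarrow\mathcal{A}(B_n)\hookrightarrow B_{n+1}$ is intertwined with the geometry at each stage (the geometric side does not obviously record the winding-number parity that cuts out the index-two subgroup), and controlling the small kernels introduced by capping off boundary components.

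A robust fall-back --- indeed the route of Perron and Vannier --- is to drop covering spaces and argue by singularity theory: realize the $T_i$ as the geometric monodromies around the vanishing cycles of the simple (ADE) singularity of type $A_n$ or $D_n$ on its Milnor fibre $S_\Gamma$, and prove directly that the monodromy group they generate has precisely the Artin presentation (no relations beyond the braid and commutation relations). I expect that for $D_n$ either this analysis or the annular-braid reduction above is unavoidable, whereas $A_n$ is essentially classical.
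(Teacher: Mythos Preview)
The paper does not give a proof of this theorem: it is stated with the attribution ``[Perron--Vannier]'' and simply cited from \cite{PV}, then used as a black box in the proofs of Theorems~\ref{bdrelthm1} and~\ref{bdrelthm2}. So there is no ``paper's own proof'' to compare against.

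On the merits of your sketch: the type-$A_n$ argument via the hyperelliptic double cover and Birman--Hilden is correct and is the standard modern alternative to the Perron--Vannier singularity-theory argument; it genuinely proves injectivity in that case. Your type-$D_n$ discussion, however, is a plan rather than a proof. You correctly identify the obstacles --- the quotient orbifold is no longer a disk, the images of the $T_i$ must be matched to the generators of the index-two copy of $\mathcal{A}(D_n)$ inside $\mathcal{A}(B_n)$, and one must check that the geometric picture really records the winding-parity data --- but none of this bookkeeping is carried out. As it stands, your proposal establishes the $A_n$ case and leaves the $D_n$ case essentially at the level of ``here is a strategy, or else fall back on the cited reference.'' That is an honest assessment of the situation, but it is not a proof of the $D_n$ half of the theorem.
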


\begin{fact} \label{San thm}
Suppose $n \geq 3$ be an integer. If curves $a_1, a_2, \cdots, a_n$ form a $n$-chain in some surface, then $S_{A_n}$ is homeomorphic to $S_{\frac{n}{2},1}$ when $n$ is even, and $S_{A_n}$ is homeomorphic to $S_{\frac{n-1}{2},2}$ when $n$ is odd.
\end{fact}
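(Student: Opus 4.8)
The plan is to pin down $S_{A_n}$ up to homeomorphism via the two invariants of a compact orientable surface with boundary: the Euler characteristic and the number of boundary components. Write $S_{A_n} = S_{g,b}$. For the Euler characteristic: by definition $S_{A_n}$ is the closure of a regular neighborhood of the $1$-complex $\Gamma = a_1 \cup \cdots \cup a_n$, hence it deformation retracts onto $\Gamma$, so $\chi(S_{A_n}) = \chi(\Gamma)$. Regarding $\Gamma$ as a graph with vertex set $\{q_1,\ldots,q_{n-1}\}$, $q_i := a_i \cap a_{i+1}$, the circle $a_1$ carries only the vertex $q_1$ and contributes one edge, symmetrically $a_n$ contributes one edge, and each interior circle $a_i$ with $2 \le i \le n-1$ carries the two vertices $q_{i-1},q_i$ and contributes two edges; so $\Gamma$ has $n-1$ vertices and $2n-2$ edges and $\chi(S_{A_n}) = (n-1)-(2n-2) = 1-n$. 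With $\chi(S_{g,b}) = 2-2g-b$ this gives $2g+b = n+1$, so it remains only to compute $b$, and then the claimed genus follows in each parity.

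To count boundary components I would induct on $n$, realizing $S_{A_n}$ from $S_{A_{n-1}}$ by a one-handle (band) attachment. Since $a_n$ meets $a_1 \cup \cdots \cup a_{n-1}$ only at the single point $q_{n-1} \in a_{n-1}$, after shrinking the annular neighborhoods $A_i$ of the $a_i$ one checks that $A_n$ meets $S_{A_{n-1}}$ in a single square $D$ running across $A_n$; hence $B := \overline{A_n \setminus D}$ is a disk, $S_{A_n} = S_{A_{n-1}} \cup B$, and $B$ is glued to $S_{A_{n-1}}$ along the two sides of $D$ that lie on $\partial A_{n-1}$, namely two disjoint arcs $\gamma_+,\gamma_-$ with one on each component of $\partial A_{n-1}$ -- i.e.\ along the two local sides of $a_{n-1}$ at $q_{n-1}$. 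On an orientable surface a one-handle attachment lowers $\chi$ by one and affects the boundary in exactly one of two ways: if $\gamma_+,\gamma_-$ lie on distinct boundary components it merges them, so $b$ drops by one; if they lie on the same component it splits that component, so $b$ rises by one. The induction hypothesis I would carry is: \emph{for $n$ even, $\partial S_{A_n}$ is connected; for $n$ odd, $\partial S_{A_n}$ has exactly two components, and the two local sides of $a_n$ at $q_n$ lie on different ones.} With base case $S_{A_1} \cong S_{0,2}$, an annulus whose core $a_1$ plainly satisfies the hypothesis, the step from odd $n-1$ to even $n$ is a merge (the hypothesis places $\gamma_\pm$ on two distinct boundary circles), making $\partial S_{A_n}$ connected, while the step from even $n-1$ to odd $n$ is a split (both $\gamma_\pm$ lie on the unique boundary circle of $S_{A_{n-1}}$), giving $\partial S_{A_n}$ exactly two components; and since $a_n$ runs as the core of the new band $B$ whose two long sides become parts of those two components, the ``different sides'' clause is restored. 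Hence $b=1$ for $n$ even and $b=2$ for $n$ odd, and $2g+b=n+1$ gives the theorem.

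The genuinely delicate point is the geometric bookkeeping of the inductive step: arranging the neighborhoods so that $A_n \cap S_{A_{n-1}}$ is exactly the plumbing square, and -- more subtly -- checking the ``different sides'' clause, that after a split the two sides of the newly attached curve really land on the two distinct new boundary circles rather than both on the same one. A cleaner route that bypasses this bookkeeping is to realize $S_{A_n}$ as the double cover of a disk branched over $n+1$ points, with $a_1,\ldots,a_n$ the preimages of a chain of arcs joining consecutive branch points (so consecutive $a_i$ meet once, at the single preimage of their common branch point, and non-consecutive $a_i$ are disjoint): then $\chi(S_{A_n}) = 2\cdot 1 - (n+1) = 1-n$, while $\partial S_{A_n}$, the preimage of $\partial D^2$, is one circle or two according as the monodromy $(1\,2)^{n+1}$ around $\partial D^2$ is nontrivial or trivial, i.e.\ according to the parity of $n$.
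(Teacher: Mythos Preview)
The paper does not prove this statement at all: it is recorded as a \emph{Fact} and left without argument, presumably as a standard result (the chain relation and the homeomorphism type of chain neighborhoods are treated, for instance, in the Farb--Margalit primer cited as \cite{FM}). So there is no ``paper's own proof'' to compare against; your write-up supplies what the paper deliberately omits.

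Your argument is correct. The Euler characteristic count via the deformation retraction onto the $1$-complex $a_1\cup\cdots\cup a_n$ is clean and gives $\chi=1-n$ immediately. The inductive handle-attachment for the boundary count is also sound, though your phrasing of the inductive hypothesis is slightly off: you refer to ``the two local sides of $a_n$ at $q_n$'', but $q_n=a_n\cap a_{n+1}$ is not yet a point of $S_{A_n}$. What you need (and clearly intend) is that, for $n$ odd, the two sides of $a_n$ along any subarc disjoint from $q_{n-1}$ lie on the two distinct components of $\partial S_{A_n}$; equivalently, that $a_n$ is a nonseparating core of $S_{A_n}$ with its annular neighborhood meeting both boundary circles. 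With that rewording the split/merge dichotomy goes through as you describe. Your alternative via the branched double cover of the disk over $n+1$ points is the slicker route and is exactly how this fact is usually established in the literature; it avoids the side-tracking bookkeeping entirely and makes the parity dependence transparent through the monodromy $(1\,2)^{n+1}$.
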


\begin{fact} \label{Sdn thm}
Suppose $n \geq 4$ be an integer. If curves $a_1, a_2, \cdots, a_n$ have curve graph $D_n$ in some surface, then $S_{D_n}$ is homeomorphic to $S_{\frac{n-2}{2},3}$ when $n$ is even, and $S_{D_n}$ is homeomorphic to $S_{\frac{n-1}{2},2}$ when $n$ is odd.
\end{fact}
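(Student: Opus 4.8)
The plan is to identify the compact, connected, orientable surface-with-boundary $S_{D_n}$ by computing two invariants, its Euler characteristic and its number of boundary components, from which the genus is then forced. (That $S_{D_n}$ has these properties is clear: it is the closure of a regular neighborhood of the connected $1$-complex $X:=\bigcup_{i=1}^{n}a_i$ inside the orientable surface $S$.) First I would compute $\chi(S_{D_n})=\chi(X)$, giving $X$ the CW structure whose $0$-cells are the intersection points of the $a_i$ and whose $1$-cells are the complementary arcs on the $a_i$. Since every geometric intersection number is $1$, there is exactly one intersection point for each edge of the graph $D_n$, so $X$ has $n-1$ vertices, and $a_i$ contributes $\deg_{D_n}(a_i)$ arcs, for $\sum_i\deg_{D_n}(a_i)=2(n-1)$ edges in all. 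Hence $\chi(S_{D_n})=(n-1)-2(n-1)=1-n$, so writing $S_{D_n}=S_{g,b}$ we get $2g+b=n+1$.

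Next I would reduce the computation of $b$ to a single band attachment. Deleting the leaf $a_n$ (adjacent in $D_n$ only to $a_{n-2}$) leaves $a_1,\dots,a_{n-1}$, whose curve graph is the path $a_1-a_2-\cdots-a_{n-1}$; these form an $(n-1)$-chain, so by Fact~\ref{San thm}, $S_{A_{n-1}}\cong S_{\frac{n-1}{2},1}$ when $n$ is odd and $S_{A_{n-1}}\cong S_{\frac{n-2}{2},2}$ when $n$ is even. Choosing the neighborhoods compatibly, $S_{A_{n-1}}\subset S_{D_n}$ and $S_{D_n}=S_{A_{n-1}}\cup N(a_n)$. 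Because $a_n$ meets $a_1\cup\cdots\cup a_{n-1}$ only at the single point $a_n\cap a_{n-2}$, we may arrange that $\beta:=\overline{a_n\setminus S_{A_{n-1}}}$ is one arc, with $\partial\beta\subset\partial S_{A_{n-1}}$ and interior disjoint from $S_{A_{n-1}}$. Thus $S_{D_n}$ is $S_{A_{n-1}}$ with the band $N(\beta)$ attached along $\partial\beta$, an operation that lowers $\chi$ by $1$, in agreement with the first step.

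It remains to show that this band attachment leaves the genus unchanged and raises $b$ by one, i.e., that both feet of $N(\beta)$ lie on a single boundary circle $C$ of $S_{A_{n-1}}$ and that $N(\beta)$ splits $C$ into two circles rather than being glued on in the handle-adding way. This is a local question near the annulus $N(a_{n-2})$, where $a_{n-3}$, $a_{n-1}$, and $a_n$ each cross $N(a_{n-2})$ in a single spanning arc; in the standard chain model the arc of $a_n$ runs parallel to that of $a_{n-1}$, and tracing the boundary of the ribbon surface shows that $a_n$ exits $\partial S_{A_{n-1}}$ at two points of the same boundary circle and that $N(\beta)$ is attached in the planar fashion. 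Granting this, $b\mapsto b+1$ and $g$ is unchanged, giving $S_{D_n}\cong S_{\frac{n-1}{2},2}$ for $n$ odd and $S_{D_n}\cong S_{\frac{n-2}{2},3}$ for $n$ even --- consistent with $2g+b=n+1$ as a sanity check.

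The hard part is exactly this last point: the Euler-characteristic bookkeeping cannot distinguish ``split a boundary circle'' from ``add a handle'', so one really has to understand how the outer arc of $a_n$ sits relative to the chain surface --- the cyclic order of the three spanning arcs $a_{n-3},a_{n-1},a_n$ along $N(a_{n-2})$ and the framing of $N(\beta)$ --- or, equivalently, to trace out the boundary cycles of the corresponding ribbon graph. An alternative that avoids singling out the subchain $A_{n-1}$ is to start from the $(n-2)$-chain surface $S_{A_{n-2}}$ and attach two bands, for $a_{n-1}$ and for $a_n$; this needs the same local analysis, with parity now entering through the number of boundary circles of $S_{A_{n-2}}$.
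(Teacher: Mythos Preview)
The paper records this result as a Fact and offers no proof, so there is nothing in the text to compare your argument against; your proposal is a genuine proof where the paper gives none.

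Your plan is sound and the Euler-characteristic count is correct. One simplification is worth noting: the ``hard part'' you flag is smaller than you suggest. Once both feet of $N(\beta)$ lie on a \emph{single} boundary circle $C$ of the oriented surface $S_{A_{n-1}}$, an orientable band attachment \emph{necessarily} splits $C$ into two circles; the only way to get a single circle back (your ``handle-adding way'') is to attach the band with a half-twist, which would make $S_{A_{n-1}}\cup N(\beta)$ non-orientable, contradicting the fact that it sits inside the orientable ambient $S$. So there is no separate framing check to perform. In particular, for $n$ odd the surface $S_{A_{n-1}}\cong S_{\frac{n-1}{2},1}$ has a \emph{unique} boundary circle, the feet of $N(\beta)$ are forced onto it, and your argument already yields $S_{D_n}\cong S_{\frac{n-1}{2},2}$ with no ribbon tracing whatsoever.

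The only case with genuine content is $n$ even, where $S_{A_{n-1}}\cong S_{\frac{n-2}{2},2}$ has two boundary circles and you must check that both points of $\partial\beta$ lie on the same one (if they lay on different circles the band would merge them, giving $S_{n/2,1}$ instead). Your instinct to exploit the paper's standing assertion that $S_\Gamma$ depends only on the tree $\Gamma$ is the right one: it lets you compute in a single convenient realization --- for instance the one pictured in the paper's own Figure~\ref{T_1^3 and T_2 satisfy 6-Artin reln} for $D_4$, extended in the obvious way --- rather than arguing uniformly over all configurations. Tracing the two boundary words of the odd-chain ribbon graph near the end annulus $N(a_{n-2})$ then settles it; this is exactly the local computation you describe, and it is the only place where any real work remains.
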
 

\section{Artin relations from direct computations}
   \label{sec:5}
   
   \subsection{Artin relations of even length}
    \label{sec:6}
   
In this subsection we prove Theorem~\ref{even Artin relation thm}, which involves finding Artin relations of even lengths in Mod(S). Let $k \geq 2$ be an integer. For every integer multiple $\ell$ of $2k+4$, Theorem~\ref{even Artin relation thm} gives explicit elements $x$ and $y$ in the mapping class group of some appropriate orientable surface, so that $prod(x,y;\ell) = prod(y,x;\ell)$. By  \verb'"'appropriate\verb'"' orientable surface, we mean one with large enough genus to accommodate a chain $\mathcal{C}_{k+1}$ of $k+1$ curves. 
 \medskip

\noindent \textbf{Remark.} When $k = 1$, $x = T_0$ and $y = T_1$. Since $i(a_0,a_1) = 1$, it follows from Fact~\ref{braid reln} that $xyx = yxy$. This implies that $prod(x,y;\ell) = prod(y,x;\ell)$ for all integers $\ell$ which are multiples of $6$. As such, the sufficient condition of Theorem~\ref{even Artin relation thm} is true. However, the necessary condition does not hold because $xyx = yxy$ while $\ell \equiv 3 \mathit{(mod6)}$.
 \medskip
 
We proceed with the proof of Theorem~\ref{even Artin relation thm} as follows. First, we prove that
\begin{center}
$prod(x,y;\ell) \stackrel{(\ast)}{=} prod(y,x;\ell)$
\end{center}
\smallskip

\noindent does not hold in Mod(S) when $\ell \in \{2,3,4,5,6,7\}$. Next, we show that $(\ast)$ holds in Mod(S) when $k = 2$ and $\ell = 8$, and that $(\ast)$ is not true when $k > 2$ and $\ell = 8$. Then, we prove two claims (Claim~\ref{claim1e} and~\ref{claim2e}) which generalize the computations for $\ell \leq 8$ and allow us to show Theorem~\ref{even Artin relation thm} in general.
\smallskip

To check that $(\ast)$ does not hold in Mod(S) when $\ell \in \{2,3,4,5,6,7\}$, we proceed as follows. For $\ell \in \{2,3,4,5,6\}$ (starting with $\ell = 2$), replace $x$ with $T_0$ and $y$ with $T_1 \cdots T_k$ in $(\ast)$. If the leftmost Dehn twists on each side are equal, we cancel them to simplify the expression. Moreover, whenever possible, we apply Facts~\ref{comm reln} and~\ref{braid reln} to rearrange Dehn twists so that the leftmost ones on each side are identical. We cancel identical leftmost Dehn twists (on each side) as much as possible to obtain a simplified expression which is equivalent to $(\ast)$. Call this expression $E_{\ell}$ and assume it is given by $P_{\ell} = Q_{\ell}$. By showing that $E_{\ell}$ does not hold in Mod(S), this implies that $(\ast)$ is not true in Mod(S) (for that particular $\ell$). Now, multiply each side of $E_{\ell}$ by $x = T_0$ or $y = T_1 \cdots T_k$ on the right accordingly (ie $P_{\ell} x = Q_{\ell} y$ or $P_{\ell} y = Q_{\ell} x$) to obtain an equivalent expression for 
\begin{center}
$prod(x,y;\ell + 1) \stackrel{(\ast \ast)}{=} prod(y,x;\ell + 1)$
\end{center}
\noindent Rearrange and cancel identical Dehn twists on the left to obtain a simplified expression $E_{\ell + 1}$ which is equivalent to $(\ast \ast)$. Now, use $E_{\ell + 1}$ to show that $(\ast \ast)$ does not hold in Mod(S). Finally, we multiply $P_7$ on the right by $y$ and $Q_7$ on the right by $x$ in the expression $E_7 : P_7 = Q_7$. This gives $E_8 : P_7 y = Q_7 x$, which is equivalent to 
\begin{center}
$prod(x,y;8) = prod(y,x;8)$
\end{center}
\noindent Using Facts~\ref{comm reln} and~\ref{braid reln} to rearrange the Dehn twists in $P_7 y$ and $Q_7 x$, we show that the expression $E_8$ is true in Mod(S) when $k = 2$ and false when $k > 2$. 
 \smallskip
 
We will now prove that $x$ and $y$ do not satisfy an Artin relation of length $\ell \leq 7$. In the computations below, we shall only cancel on the left. Right cancellations are intentionally ignored. This simplifies things, as it allows us to start a new computation by using the result from the previous one. Throughout, we shall use Facts ~\ref{comm reln} and ~\ref{braid reln} to rearrange Dehn twists so that the leftmost ones on each side are identical.
\begin{eqnarray*}
 xy = yx \Leftrightarrow T_0T_1 \cdots T_k &=& T_1T_2 \cdots T_kT_0\\
         \Leftrightarrow T_0T_1 \cdots T_k &=& T_1T_0T_2 \cdots T_k
\end{eqnarray*}
\noindent Since $i(a_0,a_1) = 1$, $T_0T_1 \neq T_1T_0$. As such, the last equality on the right hand side (RHS) does not hold.
 \smallskip
 
We now describe a method that will be used throughout this article without further explicit mention. The equation $xy = yx$ above does not hold. However, the algebraic manipulations for the equivalence of $xy = yx$ with the last equation above do hold. Next, we multiply these equations by $x$ and $y$ on the right accordingly. The computations above imply:
\begin{eqnarray*}
 xyx = yxy \Leftrightarrow (T_0T_1 \cdots T_k)T_0 &=& T_1T_0T_2 \cdots T_k(T_1 \cdots T_k)\\
           \Leftrightarrow (T_0T_1T_0)T_2 \cdots T_k &=& T_1T_0T_2T_1T_3T_2T_4T_3\cdots T_iT_{i-1}T_{i+1}T_i \cdots\\
           && T_{k-1}T_{k-2}(T_kT_{k-1}T_k)\\
           \Leftrightarrow T_1T_0T_1T_2 \cdots T_k &=&  T_1T_0T_2T_1T_3T_2T_4T_3\cdots T_iT_{i-1}T_{i+1}T_i \cdots \\
           &&(T_{k-1}T_{k-2}T_{k-1})T_kT_{k-1}\\
           \Leftrightarrow T_1T_0T_1T_2 \cdots T_k &=& T_1T_0T_1T_2T_1T_3T_2T_4T_3\cdots T_iT_{i-1}T_{i+1}T_i \cdots \\
           &&T_{k-3}T_{k-1}T_{k-2}T_kT_{k-1}%\\
\end{eqnarray*}
\begin{eqnarray*}
           \Leftrightarrow T_1T_0T_1T_2 \cdots T_k &=& T_1T_0T_1T_2T_3T_4 \cdots T_kT_1T_2T_3 \cdots T_{k-2}T_{k-1}\\
           \Leftrightarrow 1 &=& T_1 \cdots T_{k-1} 
\end{eqnarray*}
\noindent Since $T_1 \cdots T_{k-1}(a_2) = a_3$ or $T_1(a_2)$, the last equation of RHS does not hold.
 \smallskip
 
Using the above equivalences of $xyx = yxy$, and only left cancellation, we do the following for $(xy)^2 = (yx)^2$.
\begin{eqnarray*}
 (xy)^2 = (yx)^2 \Leftrightarrow T_1 \cdots T_{k-1}T_k &=& T_1 \cdots T_{k-1}T_0\\
                 \Leftrightarrow                   T_k &=& T_0  
\end{eqnarray*}
\noindent Since $k \geq 2$ by assumption, the last equality of RHS is obviously not true. 
\begin{eqnarray*}
 (xy)^2x = (yx)^2y \Leftrightarrow T_kT_0 &=& T_0T_1 \cdots T_k\\
                   \Leftrightarrow T_0T_k &=& T_0T_1 \cdots T_k\\
                   \Leftrightarrow    T_k &=& T_1 \cdots T_k
\end{eqnarray*}
\noindent Since $1 \neq T_1 \cdots T_{k-1}$ (see above), the last equality of RHS does not hold.
\begin{eqnarray*}
 (xy)^3 = (yx)^3 \Leftrightarrow T_kT_1 \cdots  T_{k-2}T_{k-1}T_k    &=& T_1 \cdots T_kT_0\\
                 \Leftrightarrow T_1 \cdots T_{k-2}(T_kT_{k-1}T_k)   &=& T_1 \cdots T_kT_0\\
                 \Leftrightarrow T_1 \cdots T_{k-2}T_{k-1}T_kT_{k-1} &=& T_1 \cdots T_kT_0\\
                 \Leftrightarrow                             T_{k-1} &=& T_0  
\end{eqnarray*} 
\noindent Since $k \geq 2$ by assumption, $T_{k-1} \neq T_0$.    
\begin{eqnarray*}
 (xy)^3x = (yx)^3y \Leftrightarrow T_{k-1}T_0 &=& T_0T_1 \cdots T_k
\end{eqnarray*} 
\noindent Since $i(a_{k-1},a_k) = 1$, $T_{k-1}T_k(a_{k-1}) = a_k$ and so $T_0T_1 \cdots T_k(a_{k-1}) = a_k$ for all $k \geq 2$. On the other hand, $T_{k-1}T_0(a_{k-1})$ equals $a_0$ when $k = 2$ and $a_{k-1}$ when $k > 2$. As such, $T_{k-1}T_0 \neq T_0T_1 \cdots T_k$.
\begin{eqnarray*}
 (xy)^4 = (yx)^4 \Leftrightarrow T_{k-1}T_1 \cdots T_{k-2}T_{k-1}T_k          &=& T_1 \cdots T_kT_0\\
                 \Leftrightarrow T_1 \cdots T_{k-3}(T_{k-1}T_{k-2}T_{k-1})T_k &=& T_1 \cdots T_kT_0\\
                 \Leftrightarrow T_1 \cdots T_{k-3}T_{k-2}T_{k-1}T_{k-2}T_k   &=& T_1 \cdots T_kT_0\\
                 \Leftrightarrow T_1 \cdots T_{k-3}T_{k-2}T_{k-1}T_kT_{k-2}   &=& T_1 \cdots T_kT_0\\
                 \Leftrightarrow                                      T_{k-2} &=& T_0 
\end{eqnarray*}
\noindent which is true when true when $k = 2$ and false when $k > 2$. This shows that $(\ast)$ holds in Mod(S) for $k = 2$ and $\ell = 8$ and is not true when $k > 2$ and $\ell = 8$.
\smallskip
 
To prove Theorem~\ref{even Artin relation thm} in general, we make the following claims:

\begin{clm} \label{claim1e}
Let $k$ and $i$ be positive integers such that $k \geq 2$ and $3 \leq i \leq k+1$. Then, for all $i$, 
\begin{center}
$prod(x,y;2i-1) = prod(y,x;2i-1) \Leftrightarrow T_{k-i+3} = T_1 \cdots T_k$
\end{center} 
\end{clm}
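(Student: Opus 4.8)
I would prove Claim~\ref{claim1e} by induction on $i$, bootstrapping the relation two steps in $\ell$ at a time and reusing the direct computations already made above. Recall the mechanism set up in the preceding discussion: cancelling only on the left, the relation $prod(x,y;\ell)=prod(y,x;\ell)$ has been shown equivalent to a reduced relation $E_{\ell}$, and multiplying the two sides of $E_{\ell}$ on the right by $x=T_0$ and $y=T_1\cdots T_k$ in the appropriate order produces a relation equivalent to $prod(x,y;\ell+1)=prod(y,x;\ell+1)$, which one re-simplifies via Facts~\ref{comm reln} and~\ref{braid reln} to obtain $E_{\ell+1}$. Every operation involved --- the braid and commutativity rewrites, right multiplication by the invertible elements $x$ and $y$, and left cancellation in the group --- is reversible, so ``$\Leftrightarrow$'' is preserved at each step. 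The base case $i=3$ is already in hand: the computation of $(xy)^2x=(yx)^2y$ above shows that $prod(x,y;5)=prod(y,x;5)$ is equivalent to $T_k=T_1\cdots T_k$, which is the case $i=3$ because there $k-i+3=k$. (When $k=2$ this base case is the whole claim, so I assume $k\ge 3$ from here on.)

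For the inductive step, suppose the claim holds for some $i$ with $3\le i\le k$, and put $j=k-i+3$, so that $3\le j\le k$. Starting from the reduced relation $E_{2i-1}$, which by hypothesis reads $T_j=T_1\cdots T_k$, I first multiply its left side by $y$ and its right side by $x$; this gives the relation $T_jT_1T_2\cdots T_k=T_1T_2\cdots T_kT_0$, which is equivalent to $prod(x,y;2i)=prod(y,x;2i)$. Since $j\ge 3$, commuting $T_j$ past $T_1,\cdots,T_{j-2}$, then applying the braid relation $T_jT_{j-1}T_j=T_{j-1}T_jT_{j-1}$, and finally commuting $T_{j-1}$ past $T_{j+1},\cdots,T_k$ rewrites the left side as $T_1T_2\cdots T_k\,T_{j-1}$; cancelling $T_1\cdots T_k$ on the left yields $E_{2i}$, namely $T_{j-1}=T_0$, i.e. $T_{k-i+2}=T_0$. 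Next I multiply the left side of $E_{2i}$ by $x=T_0$ and its right side by $y=T_1\cdots T_k$, obtaining $T_{k-i+2}T_0=T_0T_1\cdots T_k$, which is equivalent to $prod(x,y;2i+1)=prod(y,x;2i+1)$. Because $3\le i\le k$ forces $k-i+2\ge 2$, the curves $a_0$ and $a_{k-i+2}$ are disjoint, so $T_{k-i+2}T_0=T_0T_{k-i+2}$ by Fact~\ref{comm reln}; cancelling $T_0$ on the left leaves $E_{2i+1}$, namely $T_{k-i+2}=T_1\cdots T_k$. Since $k-i+2=k-(i+1)+3$, this is precisely the claim for $i+1$, and iterating over $i=3,\cdots,k$ establishes all $i$ with $3\le i\le k+1$.

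I do not anticipate a real obstacle; the one delicate point is index bookkeeping. One must check that the ``moving'' Dehn twist always stays in the range $3\le j\le k$, where the interior braid move $T_jT_{j-1}T_j=T_{j-1}T_jT_{j-1}$ is legitimate, and that $k-i+2\ge 2$, so that $T_{k-i+2}$ commutes with $T_0$; both facts are supplied exactly by the hypothesis $3\le i\le k+1$. One must also confirm that the ``cancel on the left only'' strategy really preserves logical equivalence rather than a one-way implication, which it does because every manipulation used is invertible. The even-length relations $T_{k-i+2}=T_0$ that appear as a byproduct are precisely the intermediate relations one needs to push the induction through.
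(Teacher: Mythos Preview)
Your proposal is correct and follows essentially the same approach as the paper: an induction on $i$ with base case $i=3$ already computed, in which from $E_{2i-1}$ you right-multiply by $y$ and $x$, push $T_{k-i+3}$ through via one braid move and commutations to obtain $E_{2i}:T_{k-i+2}=T_0$, then right-multiply by $x$ and $y$ and commute $T_{k-i+2}$ with $T_0$ to obtain $E_{2i+1}$. Your explicit verification of the index ranges $3\le j\le k$ and $k-i+2\ge 2$, and of the reversibility of all manipulations, matches the paper's justifications.
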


\begin{clm} \label{claim2e}
Let $k$ and $i$ be positive integers such that $k \geq 2$ and $3 \leq i \leq k+1$. Then, for all $i$,
\begin{center}
$prod(x,y;2i) = prod(y,x;2i) \Leftrightarrow T_{k-i+2} = T_0$
\end{center}
\end{clm}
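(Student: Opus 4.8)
I would prove Claims~\ref{claim1e} and~\ref{claim2e} together, by a single induction on $i$ that carries not the already-reduced equations in the statements but slightly longer ``working forms'' of the two $prod$-words, recorded up to a common left factor. Precisely, the plan is to show by induction that for every $i$ with $3\le i\le k+1$ there are elements $\gamma,\delta\in\mathrm{Mod}(S)$ with
\[
prod(x,y;2i-1)=\gamma\,T_{k-i+3}T_0,\qquad prod(y,x;2i-1)=\gamma\,T_0(T_1\cdots T_k),
\]
\[
prod(x,y;2i)=\delta\,T_{k-i+3}(T_1\cdots T_k),\qquad prod(y,x;2i)=\delta\,(T_1\cdots T_k)T_0 .
\]
Since Mod(S) is a group, cancelling $\gamma$ (resp.\ $\delta$) on the left shows that $prod(x,y;2i-1)=prod(y,x;2i-1)$ is equivalent to $T_{k-i+3}T_0=T_0(T_1\cdots T_k)$, and that $prod(x,y;2i)=prod(y,x;2i)$ is equivalent to $T_{k-i+3}(T_1\cdots T_k)=(T_1\cdots T_k)T_0$. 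Two elementary facts, both immediate from Facts~\ref{comm reln} and~\ref{braid reln}, then finish the reduction to the claimed form: (i) $T_j$ and $T_0$ commute whenever $j\ge 2$, so, since $2\le k-i+3\le k$ throughout the range, $T_{k-i+3}T_0=T_0(T_1\cdots T_k)\iff T_{k-i+3}=T_1\cdots T_k$, which is Claim~\ref{claim1e}; and (ii) the shift identity $T_j(T_1\cdots T_k)=(T_1\cdots T_k)T_{j-1}$ for $2\le j\le k$ — exactly the manipulation already used in the text to simplify $(xy)^3=(yx)^3$, and proved in general by sliding $T_j$ left across the commuting twists $T_k,\dots,T_{j+1}$, applying one braid relation, and sliding the resulting $T_j$ back to the front — which gives $T_{k-i+3}(T_1\cdots T_k)=(T_1\cdots T_k)T_{k-i+2}$ and hence $T_{k-i+3}(T_1\cdots T_k)=(T_1\cdots T_k)T_0\iff T_{k-i+2}=T_0$, which is Claim~\ref{claim2e}.

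For the induction, the base case $i=3$ is exactly the computation already performed in the text: the working forms above with $k-i+3=k$ are precisely what is obtained there for $prod(x,y;5),prod(y,x;5)$ and for $prod(x,y;6),prod(y,x;6)$ after rearranging by Facts~\ref{comm reln} and~\ref{braid reln} and cancelling a common left prefix (read off just before the one-line reductions $\Leftrightarrow T_k=T_1\cdots T_k$ and $\Leftrightarrow T_{k-1}=T_0$). The inductive step splits into two half-steps that exploit the fact (used throughout the excerpt) that consecutive $prod$-words differ by a single letter on the right: $prod(x,y;2i)=prod(x,y;2i-1)\cdot y$, $prod(y,x;2i)=prod(y,x;2i-1)\cdot x$, and similarly for $2i\to 2i+1$ with $x,y$ interchanged. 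From the length-$(2i-1)$ form, multiplying on the right gives $prod(x,y;2i)=\gamma\,T_{k-i+3}T_0(T_1\cdots T_k)$ and $prod(y,x;2i)=\gamma\,T_0(T_1\cdots T_k)T_0$; rewriting $T_{k-i+3}T_0=T_0T_{k-i+3}$ by (i) and setting $\delta=\gamma T_0$ puts these into the length-$2i$ shape. From the length-$2i$ form, multiplying on the right gives $prod(x,y;2i+1)=\delta\,T_{k-i+3}(T_1\cdots T_k)T_0$ and $prod(y,x;2i+1)=\delta\,(T_1\cdots T_k)T_0(T_1\cdots T_k)$; applying (ii) to $T_{k-i+3}(T_1\cdots T_k)$ and setting $\gamma'=\delta(T_1\cdots T_k)$ puts these into the length-$(2(i+1)-1)$ shape with the index now $k-(i+1)+3=k-i+2$, completing the step.

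The only non-bookkeeping ingredient is the shift identity (ii), and I do not expect it to pose real difficulty. The point to be careful about is the index arithmetic: one must check at every stage that $k-i+3\ge 2$ (so that $T_{k-i+3}$ commutes with $T_0$) and $k-i+3\le k$ (so that (ii) applies), which is exactly why the induction runs only over $3\le i\le k+1$ — and, consistently, why the lengths $2k+3$ and $2k+4$, where the relevant index would be $1$ or $0$, have to be handled separately in order to complete the proof of Theorem~\ref{even Artin relation thm}.
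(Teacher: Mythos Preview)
Your proposal is correct and is essentially the same argument as the paper's. The paper proves Claims~\ref{claim1e} and~\ref{claim2e} together by the same induction on $i$, using exactly your two ingredients---commutativity of $T_{k-i+3}$ (resp.\ $T_{k-i+2}$) with $T_0$, and the shift $T_j(T_1\cdots T_k)=(T_1\cdots T_k)T_{j-1}$ carried out via one braid relation---the only cosmetic difference being that the paper manipulates the running $\Leftrightarrow$-chain directly rather than tracking your explicit common prefixes $\gamma,\delta$.
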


\begin{proof}[Proof of Claim~\ref{claim1e} and~\ref{claim2e}]
To prove Claim~\ref{claim1e}, we proceed by induction on $i$. The base case, $i = 3$, has been proven above. Assume, by induction, that Claim~\ref{claim1e} holds for some $i \in \{3,\cdots,k\}$. We would like to show Claim~\ref{claim1e} holds for $i+1$. That is, we need to prove:
\begin{eqnarray*}
 prod(x,y;2i+1) = prod(y,x;2i+1) \Leftrightarrow T_{k-(i+1)+3} &=& T_1 \cdots T_k \\
                                 \Leftrightarrow T_{k-i+2} &=& T_1 \cdots T_k
\end{eqnarray*}

Assuming Claim~\ref{claim1e} for $i$ implies that $prod(x,y;2i) = prod(y,x;2i) \Leftrightarrow$
\begin{eqnarray*}
                    T_{k-i+3}T_1 \cdots T_{k-i+2}T_{k-i+3} \cdots T_k &=& T_1 \cdots T_{k-i+2} \cdots T_kT_0 \\
  \Leftrightarrow T_1 \cdots (T_{k-i+3}T_{k-i+2}T_{k-i+3}) \cdots T_k &=& T_1 \cdots T_{k-i+2} \cdots T_kT_0 \\
               \Leftrightarrow T_{k-i+2}T_{k-i+3}T_{k-i+2}T_{k-i+4} \cdots T_k &=& T_{k-i+2} \cdots T_kT_0 \\
                                 \Leftrightarrow T_{k-i+2} \cdots T_kT_{k-i+2} &=& T_{k-i+2} \cdots T_kT_0 \\
                                                                     T_{k-i+2} &=& T_0 \\
\end{eqnarray*}
\noindent To justify the above calculation, note that the $i$ under consideration belongs to $\{3,\cdots,k\}$. Since $k \geq 2$, it follows that $k-i+3 \in \{3,\cdots,k\}$. As such, $[T_{k-i+3},T_1] = 1$. In particular, this shows that Claim~\ref{claim1e} for some positive integer $i$, $3 \leq i \leq k+1$ and $k \geq 2$, implies Claim~\ref{claim2e} for that $i$. Given the equivalence $prod(x,y;2i) = prod(y,x;2i) \Leftrightarrow T_{k-i+2} = T_0$, then
\begin{eqnarray*}
  prod(x,y;2i+1) = prod(y,x;2i+1) \Leftrightarrow T_{k-i+2}T_0 &=& T_0T_1 \cdots T_k\\
                                  \Leftrightarrow T_0T_{k-i+2} &=& T_0T_1 \cdots T_k\\
                                      \Leftrightarrow T_{k-i+2}&=& T_1 \cdots T_k 
\end{eqnarray*}
\noindent The above calculation is justified because $k-i+2 \in \{2,\cdots,k-1\}$, and so $[T_{k-i+2},T_0] = 1$. This concludes the proof of  Claim~\ref{claim1e}. 
 \smallskip

By the above remark, the proof of Claim~\ref{claim2e} follows immediately from Claim~\ref{claim1e} and its proof. 
\end{proof}

\begin{proof}[Proof of Theorem~\ref{even Artin relation thm}]
Assume $\ell \equiv 0 \mathit{(mod(2k+4))}$. Since
\begin{center}
 $prod(x,y;2k+4) = prod(y,x;2k+4) \Rightarrow prod(x,y;q(2k+4)) = prod(y,x;q(2k+4))$
\end{center}
\noindent for all positive integers $q$, it suffices to show $prod(x,y;2k+4) = prod(y,x;2k+4)$. By Claim~\ref{claim2e}, we have:
\begin{eqnarray*}
   prod(x,y;2k+2) &=& prod(y,x;2k+2)\\ 
  \Leftrightarrow T_{k-(k+1)+2} &=& T_0 \\
  \Leftrightarrow           T_1 &=& T_0
\end{eqnarray*} 
\noindent which is not true. Given this, then
\begin{eqnarray*}
 prod(x,y;2k+3) = prod(y,x;2k+3) \Leftrightarrow T_1T_0 &=& T_0T_1 \cdots T_k 
\end{eqnarray*} 
\noindent Since $T_1T_0(a_1) = a_0 \neq a_2 = T_0T_1 \cdots T_k(a_1)$, $T_1T_0 \neq T_0T_1 \cdots T_k $. Finally,
\begin{eqnarray*}
 prod(x,y;2k+4) = prod(y,x;2k+4) \Leftrightarrow (T_1T_0T_1) \cdots T_k &=& T_0T_1 \cdots T_kT_0 \\ 
                                   \Leftrightarrow T_0T_1T_0 \cdots T_k &=& T_0T_1T_0  \cdots T_k
\end{eqnarray*}
\noindent which is true. 
 \smallskip
 
Conversely, assume $\ell$ is not a multiple of $2k+4$. Then
\begin{center}
 $prod(x,y;\ell) = prod(y,x;\ell) \Leftrightarrow prod(x,y;r) = prod(y,x;r)$
\end{center}
\noindent for some $r \in \left\{1, \cdots, 2k+3 \right\}$, where $\ell \equiv r \mathit{(mod(2k+4))}$.
 \smallskip

If $r = 2k+3$, it was shown above that $prod(x,y;r) \neq prod(y,x;r)$. 
 \smallskip
 
If $r < 2k+3$ is odd; say $r = 2s-1$ for some positive integer $s$, then
\begin{center}
 $prod(x,y;r) = prod(y,x;r) \Leftrightarrow T_{k-s+3} = T_1 \cdots T_k$
\end{center}
\noindent by Claim~\ref{claim1e}. But then $T_{k-s+3}(a_{k-s+3}) = a_{k-s+3}$ while
\begin{eqnarray*}
 T_1 \cdots T_k(a_{k-s+3}) &=& T_1 \cdots T_{k-s+2}T_{k-s+3}T_{k-s+4}(a_{k-s+3})\\
                           &=& T_1 \cdots T_{k-s+2}(a_{k-s+4})\\
                           &=& a_{k-s+4} \neq a_{k-s+3}
\end{eqnarray*}
If $r$ is even; say $r = 2s$, then by Claim~\ref{claim2e}, 
\begin{center}
$prod(x,y;r) = prod(y,x;r) \Leftrightarrow T_{k-s+2} = T_0 \Leftrightarrow k-s+2 = 0 \Leftrightarrow r = 2k+4$
\end{center}
\end{proof}

\begin{conj}
Let $a_0,a_1,\cdots,a_k$ and $T_i$, $i \in \left\{0,\cdots,k\right\}$ be as in Theorem~\ref{even Artin relation thm}. Let $x = T_0$ and 
$y = T_{\sigma(1)}T_{\sigma(2)} \cdots T_{\sigma(k)}$, where $\sigma \in S_k$. Then
\begin{center} 
  $prod(x,y;n) = prod(y,x;n) \Leftrightarrow n \equiv 0 \mathit{(mod(2k+4))}$
\end{center}
\end{conj}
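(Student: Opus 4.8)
The plan is to transport the whole question into a braid group via Perron--Vannier, and then detect the relation after projecting to the associated symmetric group. Since $a_0,\dots,a_k$ is a chain of $k+1$ curves, its curve graph is $A_{k+1}$, so by Theorem~\ref{pv} the geometric homomorphism $\mathcal{A}(A_{k+1})\to\mathrm{Mod}(S_{A_{k+1}})$ is injective; hence $G:=\langle T_0,T_1,\dots,T_k\rangle$ is a faithful copy of $\mathcal{A}(A_{k+1})$, i.e. of the braid group $B_{k+2}$, with each $T_i$ a standard generator. (For an arbitrary ambient $S$ one must first check, via the standard subsurface injectivity results, that this identification survives inside $\mathrm{Mod}(S)$.) As the statement involves only $x=T_0$ and $y=T_{\sigma(1)}\cdots T_{\sigma(k)}$, it becomes a purely group-theoretic assertion in $B_{k+2}$, and it suffices to prove (i) $prod(x,y;2k+4)=prod(y,x;2k+4)$, and (ii) that $prod(x,y;n)=prod(y,x;n)$ forces $2k+4\mid n$.

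For (i) I would note that both $xy=T_0T_{\sigma(1)}\cdots T_{\sigma(k)}$ and $yx=T_{\sigma(1)}\cdots T_{\sigma(k)}T_0$ are Coxeter-type braids, that is, products in which each standard generator of $B_{k+2}$ occurs exactly once. Two such products represent the same braid exactly when they differ by commutations of distant generators, so the distinct ones correspond to acyclic orientations of the path $A_{k+1}$; conjugating by the leftmost letter performs a source-to-sink flip, and such flips act transitively on the acyclic orientations of a connected graph, so every Coxeter-type braid is conjugate to $\delta:=T_0T_1\cdots T_k$. Since $\delta^{\,k+2}=\Delta^2$ generates the center of $B_{k+2}$, we obtain $(xy)^{k+2}=\Delta^2=(yx)^{k+2}$, which is precisely $prod(x,y;2k+4)=prod(y,x;2k+4)$; raising to the $q$-th power gives the relation for every multiple of $2k+4$.

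For (ii) I would compose with the canonical quotient $B_{k+2}=\mathcal{A}(A_{k+1})\twoheadrightarrow W(A_{k+1})=S_{k+2}$, which sends $T_i$ to $s_i=(i,i+1)$ on $\{0,1,\dots,k+1\}$; put $\bar x=s_0$ and $\bar y=s_{\sigma(1)}\cdots s_{\sigma(k)}$. Then $\bar y$ is a Coxeter element of the parabolic $S_{\{1,\dots,k+1\}}$, hence a $(k+1)$-cycle fixing $0$, and $c:=\bar x\bar y$ is a $(k+2)$-cycle in which $0$ and $1$ are cyclically adjacent. When $n=2m$, the equality $prod(\bar x,\bar y;n)=prod(\bar y,\bar x;n)$ reads $c^m=\bar x c^m\bar x$, i.e. $\bar x$ centralizes $c^m$; since $0$ and $1$ are adjacent in the $(k+2)$-cycle $c$, the transposition $\bar x$ centralizes $c^m$ exactly when $k+2\mid m$, i.e. $2k+4\mid n$. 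When $n=2m+1$, using $\bar x\bar y=c$ the equality becomes $c^m\bar x=\bar x c^{m+1}$, which rearranges to $\bar x c^m\bar x c^{-m}=c$ and hence to $\bar y=c^m\bar x c^{-m}$; but $\bar y$ is a $(k+1)$-cycle and $k\ge2$, so it is not a transposition, a contradiction. Thus $prod(\bar x,\bar y;n)=prod(\bar y,\bar x;n)$ in $S_{k+2}$ only if $2k+4\mid n$, which proves (ii) and the conjecture.

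The genuinely delicate point is not the short computations above but the very first reduction: one needs $\langle T_0,\dots,T_k\rangle$ to be a faithful $B_{k+2}$ inside the \emph{given} ambient surface, which requires a careful appeal to subsurface injectivity. A clean way to bypass this is to replace the Coxeter quotient in (ii) by the mod-$2$ homology (symplectic) representation of $\mathrm{Mod}(S)$: the chain twists $T_i$ become involutions that, by A'Campo's computation of the mod-$2$ monodromy of an $A_{k+1}$ configuration, generate a copy of $W(A_{k+1})=S_{k+2}$ with $T_i\mapsto s_i$; this homomorphism is defined on all of $\mathrm{Mod}(S)$ and carries $(x,y)$ to $(\bar x,\bar y)$, so step (ii) goes through verbatim. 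The only further thing to confirm in (i) is that the source-to-sink transitivity statement applies, which for the path $A_{k+1}$ is immediate.
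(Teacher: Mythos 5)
First, be aware that the paper offers no proof of this statement: it is stated as a conjecture and verified only for $k=2,3,4$ by brute force, so you are attempting something strictly beyond the paper. Your core group theory is, as far as I can check, correct and would settle the conjecture. Part (i) is right: $xy$ and $yx$ are positive words containing each generator of $\mathcal{A}(A_{k+1})\cong B_{k+2}$ exactly once, any such word is conjugate to $\delta=T_0T_1\cdots T_k$ by iterated cycling (source-to-sink flips act transitively on acyclic orientations of a tree), and $\delta^{k+2}=\Delta^2$ is central, so $(xy)^{k+2}=\Delta^2=(yx)^{k+2}$; since this is a consequence of the braid and commutation relations alone, it holds in $\mathrm{Mod}(S)$ with no injectivity needed. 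Part (ii) in $S_{k+2}$ also checks out: $\bar y$ is a $(k+1)$-cycle fixing $0$, $c=\bar x\bar y$ is a $(k+2)$-cycle with $c(0)=1$, the even case reduces to $c^m$ lying in the centralizer $\langle(0\,1)\rangle\times S_{\{2,\dots,k+1\}}$ of $\bar x$, which forces $c^m=1$ (the alternative $c^m(0)=1$, $c^m(1)=0$ would give $m\equiv 1$ and $m\equiv -1\pmod{k+2}$ simultaneously), and the odd case forces the $(k+1)$-cycle $\bar y$ to be conjugate to a transposition, which is absurd for $k\ge 2$.

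The genuine gap is exactly where you flagged it, and neither of your two proposed fixes is adequate as stated. Theorem~\ref{pv} gives injectivity of $B_{k+2}\to \mathrm{Mod}(S_{A_{k+1}})$ only; the induced map $\mathrm{Mod}(S_{A_{k+1}})\to \mathrm{Mod}(S)$ can have nontrivial kernel (boundary twists of the regular neighborhood may die in $S$), so ``the identification survives inside $\mathrm{Mod}(S)$'' is false in general --- the chain relation can kill $\Delta^2$, say. What you actually need, and what is true, is weaker: that kernel is generated by (products of) boundary twists, which are central in $\mathrm{Mod}(S_{A_{k+1}})$, hence $\ker\bigl(B_{k+2}\to \mathrm{Mod}(S)\bigr)$ is a central subgroup of $B_{k+2}$; since $Z(B_{k+2})=\langle\Delta^2\rangle$ consists of pure braids, the assignment $T_i\mapsto s_i$ still induces a well-defined homomorphism from $\langle T_0,\dots,T_k\rangle\le \mathrm{Mod}(S)$ onto $S_{k+2}$, which is all that step (ii) requires. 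You should write this out. Your proposed bypass via the mod-$2$ symplectic representation, by contrast, genuinely fails: the transvections $\tau_{[a_i]}$ generate only a quotient of $W(A_{k+1})$, and when the classes $[a_0],\dots,[a_k]$ are linearly dependent in $H_1(S;\mathbb{Z}/2)$ --- which happens for suitable chains in closed surfaces, e.g.\ when a boundary component of the regular neighborhood bounds a disk, forcing $[a_0]+[a_2]+\cdots=0$ --- this quotient is proper. For $k=2$ with $[a_0]=[a_2]$ the image of $c$ has order $2$ rather than $4$, so the mod-$2$ test wrongly accepts $n=4$; step (ii) does not go through verbatim there, and the argument must run through the braid group as above.
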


\noindent The conjecture holds when $k = 2,3,$ and $4$. This has been proven by brute force calculations. For $k = 4$, the are six permutations (including the one of Theorem~\ref{even Artin relation thm}), and $2k+4 = 10$. 

  \subsection{Artin relations of odd length}
   \label{Artin relations of odd length}
   
In this subsection, we prove Theorem~\ref{odd Artin relation theorem}, which provides Artin relations of every odd length in Mod(S). In a way, the relations of Theorem~\ref{odd Artin relation theorem} are generalizations of the famous Artin relation of length three (also called the braid relation), to all odd lengths.
   
\begin{proof}[Proof of Theorem~\ref{odd Artin relation theorem}]
In this proof, we again cancel only from the left. This allows us to pick up where we left when studying the next Artin relation. Right cancellations are intentionally ignored. When $k = 1$, there are two curves $a_1$ and $b_1$ with $i(a_1,b_1) =1$. Hence, $x = A_1$ and $y = B_1$, and by Fact~\ref{braid reln}, $xyx = yxy$. Consequently, $prod(x,y;\ell) = prod(y,x;\ell)$ for all positive integers $\ell$ that are multiples of $3$. Conversely, suppose $\ell \not \equiv 0 \mathit{(mod3)}$. If $prod(x,y;\ell) = prod(y,x;\ell)$, then $prod(x,y;r) = prod(y,x;r)$ for some $r \in \left\{ 1,2 \right\}$. Since $i(a_1,b_1) = 1$, $x \neq y$. Moreover, if $xy = yx$, it follows from $xyx = yxy$ that $x = y$, which is a contradiction. This proves the theorem for $k = 1$. So, we henceforth assume that $k \geq 2$.
 \medskip
 
We remark that when $B_{k-2}$ or $B_{k-3}$ are included in any of the computations below, it should be assumed that $k > 3$. The inclusion of such terms in the rather complicated calculations is intended to help the reader follow the proof. Although not included, the calculations for $k \in \{2,3\}$ follow along the same lines of the ones shown (for $k > 3$). In fact, the cases $k \in \{2,3\}$ are much easier because there are less terms involved.
\begin{eqnarray*}
 xy = yx \Leftrightarrow A_1 \cdots A_{k-1}A_kB_1 \cdots B_k &=& B_1 \cdots B_kA_1 \cdots A_{k-1}A_k  \\
         \Leftrightarrow A_1 \cdots A_{k-1}A_kB_1 \cdots B_k &=& A_1 \cdots A_{k-1}B_1 \cdots B_kA_k \\
                          \Leftrightarrow A_kB_1 \cdots B_k  &=& B_1A_kB_2 \cdots B_k
\end{eqnarray*}
\noindent Since $[A_k,B_1] \neq 1$, the last equality of RHS does not hold.
\begin{eqnarray*}
 (xy)x = (yx)y \Leftrightarrow A_kB_1 \cdots B_kA_1 \cdots A_k  &=& B_1A_kB_2 \cdots B_kB_1 \cdots B_{k-1}B_k 
\end{eqnarray*}

\noindent Set $\delta_1 = A_kB_1 \cdots B_kA_1 \cdots A_k$. Then
\begin{eqnarray*}
(xy)x = (yx)y \Leftrightarrow \delta_1 &=& B_1A_kB_2 \cdots B_{k-1}B_1 \cdots B_{k-2}B_kB_{k-1}B_k\\
                                       &=& B_1A_kB_2 \cdots B_{k-2}B_1 \cdots B_{k-1}B_{k-2}B_kB_{k-1}B_k\\
& \vdots \\
                                       &=& B_1A_kB_2B_1B_3B_2 \cdots B_iB_{i-1}B_{i+1}B_i \cdots \\
& &  B_{k-2}B_{k-3}B_{k-1}B_{k-2}(B_kB_{k-1}B_k)\\
                                       &=& B_1A_kB_2B_1B_3B_2 \cdots B_iB_{i-1}B_{i+1}B_i \cdots \\ 
& &B_{k-2}B_{k-3}(B_{k-1}B_{k-2}B_{k-1})B_kB_{k-1}\\
                                       &=& B_1A_kB_2B_1B_3B_2 \cdots B_iB_{i-1}B_{i+1}B_i \cdots  \\ 
& & (B_{k-2}B_{k-3}B_{k-2})B_{k-1}B_{k-2}B_kB_{k-1}\\
& \vdots \\
                                       &=& (B_1A_kB_1)B_2B_1B_3B_2B_4B_3 \cdots B_iB_{i-1}B_{i+1}B_i \cdots \\ 
& & B_{k-3}B_{k-2}B_{k-3}B_{k-1}B_{k-2}B_kB_{k-1}\\
                                       &=& A_kB_1A_kB_2B_1B_3B_2B_4B_3 \cdots B_iB_{i-1}B_{i+1}B_i \cdots \\ 
& & B_{k-3}B_{k-2}B_{k-3}B_{k-1}B_{k-2}B_kB_{k-1}\\
& \vdots \\
                                       &=& A_kB_1B_2B_3 \cdots B_kA_kB_1B_2B_3 \cdots B_{k-1}
\end{eqnarray*}
\noindent In order to get the last expression above, we shifted the second $A_k$ to the right as much as possible, and the $B_i$'s to the left as much as possible. Similar shifts occur in future computations. 
\begin{eqnarray*}
(xy)x = (yx)y \Leftrightarrow A_kB_1 \cdots B_kA_1 \cdots A_k &=& A_kB_1 \cdots B_kA_kB_1 \cdots B_{k-1} \\
                               \Leftrightarrow A_1 \cdots A_k &=& A_kB_1 \cdots B_{k-1}
\end{eqnarray*}
\noindent Acting with the products $A_1 \cdots A_k$ and $A_kB_1 \cdots B_{k-1}$ on $a_1$ yields distinct curves. Consequently, the two products are distinct. 
\begin{eqnarray*}
 (xy)^2 = (yx)^2 \Leftrightarrow A_1 \cdots A_kB_1 \cdots B_k &=& A_kB_1 \cdots B_{k-1}A_1 \cdots A_{k-2}A_{k-1}A_k\\
   \Leftrightarrow A_1 \cdots A_{k-2}A_{k-1}A_kB_1 \cdots B_k &=& A_1 \cdots A_{k-2}A_kB_1 \cdots B_{k-1}A_{k-1}A_k\\
   \Leftrightarrow A_{k-1}A_kB_1 \cdots B_k &=& A_kA_{k-1}B_1A_kB_2 \cdots B_{k-1}
\end{eqnarray*}
Since $A_{k-1}A_kB_1 \cdots B_k(a_{k-1}) = a_k \neq b_1 = A_kA_{k-1}B_1A_kB_2 \cdots B_{k-1}(a_{k-1})$, the two expressions are distinct.
\begin{eqnarray*}
 (xy)^2x &=& (yx)^2y \Leftrightarrow \\
 A_{k-1}A_kB_1 \cdots B_kA_1 \cdots A_k &=& A_kA_{k-1}B_1A_kB_2 \cdots B_{k-1}B_1 \cdots B_k
\end{eqnarray*}

\noindent Set $\delta_2 = A_{k-1}A_kB_1 \cdots B_kA_1 \cdots A_k$. Then $(xy)^2x = (yx)^2y \Leftrightarrow$

%\pagebreak
\begin{eqnarray*}  
 \delta_2 &=& A_kA_{k-1}B_1A_kB_2 \cdots B_{k-2}B_1 \cdots B_{k-1}B_{k-2}B_{k-1}B_k\\
          &=& A_kA_{k-1}B_1A_kB_2 \cdots B_{k-3}B_1 \cdots B_{k-2}B_{k-3}B_{k-1}B_{k-2}B_{k-1}B_k\\
& \vdots \\
          &=& A_kA_{k-1}B_1A_kB_2B_1B_3B_2 \cdots B_iB_{i-1}B_{i+1}B_i \cdots \\ 
          & & B_{k-2}B_{k-3}(B_{k-1}B_{k-2}B_{k-1})B_k \\
          &=& A_kA_{k-1}B_1A_kB_2B_1B_3B_2 \cdots B_iB_{i-1}B_{i+1}B_i \cdots \\ 
          & & (B_{k-2}B_{k-3}B_{k-2})B_{k-1}B_{k-2}B_k\\
          &=& A_kA_{k-1}B_1A_kB_2B_1B_3B_2 \cdots B_iB_{i-1}B_{i+1}B_i \cdots \\ 
          & & B_{k-3}B_{k-2}B_{k-3}B_{k-1}B_{k-2}B_k\\
& \vdots \\
          &=& (A_kA_{k-1}A_k)B_1A_kB_2B_1B_3B_2 \cdots B_iB_{i-1}B_{i+1}B_i \cdots \\ 
          & & B_{k-3}B_{k-2}B_{k-3}B_{k-1}B_{k-2}B_k\\
          &=& A_{k-1}A_kA_{k-1}B_1A_kB_2B_1B_3B_2 \cdots B_iB_{i-1}B_{i+1}B_i \cdots \\ 
          & & B_{k-3}B_{k-2}B_{k-3}B_{k-1}B_{k-2}B_k\\
          &=& A_{k-1}A_kB_1 \cdots B_kA_{k-1}A_kB_1 \cdots B_{k-2}
\end{eqnarray*}
\begin{eqnarray*}
                                                (xy)^2x &=& (yx)^2y \\
 \Leftrightarrow A_{k-1}A_kB_1 \cdots B_kA_1 \cdots A_k &=& A_{k-1}A_kB_1 \cdots B_kA_{k-1}A_kB_1 \cdots B_{k-2} \\
                         \Leftrightarrow A_1 \cdots A_k &=& A_{k-1}A_kB_1 \cdots B_{k-2}
\end{eqnarray*}
Since $A_1 \cdots A_k(a_1) = a_2 \neq a_1 = A_{k-1}A_kB_1 \cdots B_{k-2}(a_1)$, the two expressions are different.
\noindent Based on the equivalences above, one suspects that equivalent expressions for $(xy)^m = (yx)^m$ and $(xy)^mx = (yx)^my$ can be given in general. We make the following claims:

\begin{clm} \label{claim1o}
Let $k$ and $m$ be a positive integers such that $k \geq 2$ and $m < k$. Then, for all $m$, $(xy)^m = (yx)^m \Leftrightarrow$ 
\begin{eqnarray*}
 A_{k-m+1} \cdots A_kB_1 \cdots B_k &\stackrel{(1)}{=}& A_{k-m+2}A_{k-m+1}A_{k-m+3}A_{k-m+2}A_{k-m+4} A_{k-m+3} \\ 
                                    & & A_{k-m+5}A_{k-m+4} \cdots A_{k-1}B_1A_kB_2 \cdots B_{k-m+1}
\end{eqnarray*}
\end{clm}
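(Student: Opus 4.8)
The plan is to prove Claim~\ref{claim1o} by induction on $m$, exactly mirroring the hand computations already carried out for $m = 1$ and $m = 2$. The base case $m = 1$ is precisely the equivalence $(xy)x = (yx)y \Leftrightarrow A_kB_1\cdots B_k = A_kB_1\cdots B_kA_kB_1\cdots B_{k-1}$ that was established above (after left-cancelling $A_kB_1\cdots B_kA_1\cdots A_{k-1}$; note that for $m=1$ the right-hand side of $(1)$ degenerates to $A_{k-1}A_{k-2}\cdots$, i.e. the ``$A$-staircase'' is empty and one is left with $B_1A_kB_2\cdots B_k$). So the real content is the inductive step: assuming the displayed equivalence for some $m$ with $1 \le m < k-1$, derive it for $m+1$.

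For the inductive step I would first pass from $(xy)^m = (yx)^m$ to $(xy)^m x = (yx)^m y$ by right-multiplying the two sides of $(1)$ by $x = A_1\cdots A_k$ and $y = B_1\cdots B_k$ respectively, then simplify. The key algebraic move is the one already displayed in the $\delta_1$ and $\delta_2$ computations: on the right-hand side one repeatedly applies the braid relation (Fact~\ref{braid reln}) to the block $B_1 B_2 \cdots B_k$ followed by $B_1 B_2 \cdots B_{k-1}$, shifting a trailing generator leftwards through the staircase $B_2B_1\,B_3B_2\,\cdots\,B_iB_{i-1}B_{i+1}B_i\cdots$, until the whole tail collapses to $A_k B_1\cdots B_k \cdot (\text{something})$, and simultaneously one absorbs the freshly introduced $A_{k-m}$ into the $A$-staircase via a braid relation $A_{k-m+1}A_{k-m}A_{k-m+1} = A_{k-m}A_{k-m+1}A_{k-m}$, using commutativity (Fact~\ref{comm reln}) to move the $A_i$'s past the $B_j$'s whenever their curves are disjoint. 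After left-cancelling the common prefix $A_{k-m+1}\cdots A_k B_1 \cdots B_k A_1 \cdots A_{k-m}$, one lands at a compact equivalent form for $(xy)^m x = (yx)^m y$; right-multiplying once more by $x$ and $y$ and repeating the same shifting/cancelling yields the displayed form of $(1)$ with $m$ replaced by $m+1$.

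I expect the main obstacle to be bookkeeping rather than any conceptual difficulty: one must keep precise track of which indices appear in the $B$-staircase and $A$-staircase after each round of braid relations, and verify at every step that the commutativity relations being invoked are legitimate, i.e. that the relevant curves have intersection number $0$. Concretely, the moves require facts such as $i(a_i, b_j) = 0$ for $|i-j|$ large, $i(a_i,a_j)=0$ for $|i-j|\ge 2$, and $i(b_i,b_j)=0$ for $|i-j|\ge 2$, all of which follow from the hypothesis that $a_1,\dots,a_k,b_1,\dots,b_k$ form the chain $\mathcal{C}_{2k}$ together with Definition~\ref{chain relation defn}; the constraint $m < k$ is exactly what guarantees that the indices $k-m+1,\dots,k$ stay in range and that the prefix being cancelled is genuinely a common left-divisor. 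It is prudent to do the induction in two half-steps (first establishing the intermediate identity for $(xy)^m x = (yx)^m y$, then for $(xy)^{m+1} = (yx)^{m+1}$), since this is how the $m=1,2$ cases were organized and it makes each braid manipulation short enough to check directly. Once Claim~\ref{claim1o} is in hand, acting on suitable curves $a_i$ (as in the $(xy)^2$ and $(xy)^2 x$ steps above) will show the two sides are never equal in the stated range, which is the use to which the claim will be put.
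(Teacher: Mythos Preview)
Your strategy is exactly the paper's: prove Claims~\ref{claim1o} and~\ref{claim2o} together by induction on $m$, in two half-steps (Claim~\ref{claim1o} for $m$ $\Rightarrow$ Claim~\ref{claim2o} for $m$ $\Rightarrow$ Claim~\ref{claim1o} for $m+1$), using precisely the braid/commutation ``staircase'' manipulations you describe.

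One small correction: you misidentify the base case. Claim~\ref{claim1o} at $m=1$ is the equivalence for $(xy)^1=(yx)^1$, i.e.\ $xy=yx \Leftrightarrow A_kB_1\cdots B_k = B_1A_kB_2\cdots B_k$, which is the \emph{first} displayed computation, not the $(xy)x=(yx)y$ one. Also, the commutation pattern between the $A_i$ and $B_j$ is not ``$i(a_i,b_j)=0$ for $|i-j|$ large'': in the chain $\mathcal{C}_{2k}$ the only nontrivial intersection among mixed pairs is $i(a_k,b_1)=1$, so $[A_i,B_j]=1$ unless $(i,j)=(k,1)$. With those two fixes your outline matches the paper's proof line for line.
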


\begin{clm} \label{claim2o}
Let $k$ and $m$ be a positive integers such that $k \geq 2$ and $m \leq k$. Then, for all $m$,
\begin{center}
 $(xy)^mx = (yx)^my \Leftrightarrow A_1 \cdots A_k \stackrel{(2)}{=} A_{k-m+1} \cdots A_kB_1 \cdots B_{k-m}$ 
\end{center}
\end{clm}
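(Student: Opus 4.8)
The plan is to prove Claims~\ref{claim1o} and~\ref{claim2o} together by a single induction on $m$, in exact parallel with the treatment of Claims~\ref{claim1e} and~\ref{claim2e}. The only algebraic inputs are the relations coming from the chain $a_1,\dots,a_k,b_1,\dots,b_k$: the commutativity relations $[A_i,A_j]=[B_i,B_j]=1$ for $|i-j|\ge 2$ and $[A_i,B_j]=1$ for every $(i,j)\neq(k,1)$, together with the braid relations $A_iA_{i+1}A_i=A_{i+1}A_iA_{i+1}$, $B_iB_{i+1}B_i=B_{i+1}B_iB_{i+1}$ and $A_kB_1A_k=B_1A_kB_1$ (Facts~\ref{comm reln} and~\ref{braid reln}). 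The base cases are already done above: the computation $xy=yx\Leftrightarrow A_kB_1\cdots B_k=B_1A_kB_2\cdots B_k$ is Claim~\ref{claim1o} for $m=1$ (reading the empty $A$-block on the right of $(1)$ as $1$), and $(xy)x=(yx)y\Leftrightarrow A_1\cdots A_k=A_kB_1\cdots B_{k-1}$ is Claim~\ref{claim2o} for $m=1$. As in those computations we only ever cancel on the left, and every move used --- a commutation, a braid substitution, or a left cancellation $UA=UB\Leftrightarrow A=B$ --- is reversible, so each resulting equation is genuinely equivalent to the original; this is the device the authors describe just after the first computation of Section~\ref{sec:6}.

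For the inductive step I would alternate two implications; write relation $(1)$ as $L=R$ and relation $(2)$ as $L'=R'$. First, Claim~\ref{claim1o} for $m$ implies Claim~\ref{claim2o} for $m$: since $prod(x,y;2m+1)=(xy)^mx$ and $prod(y,x;2m+1)=(yx)^my$, multiplying the left-hand side of $L=R$ on the right by $x$ and the right-hand side on the right by $y$ shows $prod(x,y;2m+1)=prod(y,x;2m+1)\Leftrightarrow Lx=Ry$. Here $Lx=\bigl(A_{k-m+1}\cdots A_kB_1\cdots B_k\bigr)\bigl(A_1\cdots A_k\bigr)$, while the ``$\delta$-computation'' --- pushing the trailing $A$-block of $Ry$ to the right and the $B_i$'s to the left, using only the relations listed, exactly as carried out for $m=1$ and $m=2$ --- collapses $Ry$ to $\bigl(A_{k-m+1}\cdots A_kB_1\cdots B_k\bigr)\bigl(A_{k-m+1}\cdots A_kB_1\cdots B_{k-m}\bigr)$. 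Cancelling the common left factor $A_{k-m+1}\cdots A_kB_1\cdots B_k$ then gives precisely $L'=R'$. Second, Claim~\ref{claim2o} for $m$ implies Claim~\ref{claim1o} for $m+1$: multiplying the left-hand side of $L'=R'$ on the right by $y$ and the right-hand side on the right by $x$ shows $prod(x,y;2m+2)=prod(y,x;2m+2)\Leftrightarrow L'y=R'x$, i.e. $\bigl(A_1\cdots A_k\bigr)\bigl(B_1\cdots B_k\bigr)=\bigl(A_{k-m+1}\cdots A_kB_1\cdots B_{k-m}\bigr)\bigl(A_1\cdots A_k\bigr)$, and rearranging by Facts~\ref{comm reln} and~\ref{braid reln} and cancelling on the left yields $(1)$ with $m$ replaced by $m+1$.

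Running this alternation gives the two claims in the stated ranges; its terminal instance, $m=k$ of Claim~\ref{claim2o}, is the case where $R'$ degenerates so that $(2)$ reads $A_1\cdots A_k=A_1\cdots A_k$, and this is precisely the Artin relation $prod(x,y;2k+1)=prod(y,x;2k+1)$. The same computation also shows that $prod(x,y;2k)=prod(y,x;2k)$, that is $(xy)^k=(yx)^k$, fails --- as the theorem requires, since $2k\not\equiv 0\bmod(2k+1)$. Theorem~\ref{odd Artin relation theorem} then follows as in the even case: for an arbitrary $\ell$, Claims~\ref{claim1o} and~\ref{claim2o} reduce $prod(x,y;\ell)=prod(y,x;\ell)$ to an explicit word equation in the $A_i$ and $B_i$, and evaluating both sides on a suitably chosen curve of the chain shows it holds if and only if $\ell\equiv 0\bmod(2k+1)$.

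I expect the $\delta$-computation in the first half of the inductive step to be the main obstacle: one must verify that the long word obtained from $Ry$ really telescopes to the doubled prefix $\bigl(A_{k-m+1}\cdots A_kB_1\cdots B_k\bigr)\bigl(A_{k-m+1}\cdots A_kB_1\cdots B_{k-m}\bigr)$. Every move there is forced by a commutation or a braid relation of the chain, but tracking which subscripts may be transposed (position $i$ for $a_i$, position $k+j$ for $b_j$, adjacency only between consecutive positions) and the order in which to apply the braid moves is delicate. The explicit cases $m=1$ and $m=2$ displayed above are meant to be the template, and making that pattern into a clean general argument --- together with the analogous but easier rearrangement in the second implication --- is the bulk of the work.
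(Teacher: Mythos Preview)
Your proposal is correct and follows essentially the same approach as the paper: the paper likewise proves Claims~\ref{claim1o} and~\ref{claim2o} together by an alternating induction on $m$, deriving Claim~\ref{claim2o} for $m$ from Claim~\ref{claim1o} for $m$ via exactly the ``$\delta$-computation'' you describe (the paper calls it $\delta_3$), and then deriving Claim~\ref{claim1o} for $m+1$ from Claim~\ref{claim2o} for $m$ by the easier rearrangement you outline. Your assessment that the telescoping of $Ry$ to the doubled prefix is the bulk of the work is accurate, and the paper handles it by the same template you identify from the $m=1,2$ cases.
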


In the claims above, $k$ represents the number of $A_i$'s in $x$ ($k$ is also equal to the number of $B_i$'s in $y$). If $\ell$ represents the lengths of the Artin relations considered in Claims ~\ref{claim1o} and ~\ref{claim2o}, then
\[
 m =
  \left\{
   \begin{array}{ll}
    \ell/2 & \mbox{when $\ell$ is even} \\
     \medskip
    \frac{\ell - 1}{2} & \mbox{when $\ell$ is odd} \\
   \end{array}
  \right.
\]
 %\pagebreak
 
\noindent \textbf{Proof of Claims~\ref{claim1o} and~\ref{claim2o}} - 
We proceed by induction on $m$. Suppose $(xy)^m = (yx)^m \Leftrightarrow (1)$ holds. First, we prove $(xy)^mx = (yx)^my \Leftrightarrow (2)$ is true, then we show $(xy)^{m+1} = (yx)^{m+1} \Leftrightarrow (1)$ with $m$ replaced with $m+1$. Fix an arbitrary $m$ with $1 \leq m \leq k-2$, and assume Claim~\ref{claim1o} is true for that $m$.
\begin{eqnarray*}
                                         (xy)^mx &=& (yx)^my \Leftrightarrow \\
A_{k-m+1} \cdots A_kB_1 \cdots B_kA_1 \cdots A_k &=& A_{k-m+2}A_{k-m+1}A_{k-m+3}A_{k-m+2}A_{k-m+4} \\
                                                 & & A_{k-m+3}A_{k-m+5}A_{k-m+4} \cdots A_{k-1}B_1A_kB_2\\ 
                                                 & & \cdots B_{k-m+1}B_1 \cdots B_k
\end{eqnarray*}
\noindent Set $\delta_3 = A_{k-m+1} \cdots A_kB_1 \cdots B_kA_1 \cdots A_k$. Also, set $s = k-m$, and note that $s > 0$ since $m < k$.  Then $(xy)^mx = (yx)^my \Leftrightarrow$
\begin{eqnarray*}
A_{s+1} \cdots A_kB_1 \cdots B_kA_1 \cdots A_k &=& A_{s+2}A_{s+1}A_{s+3}A_{s+2}A_{s+4}A_{s+3} \\
                                               & & A_{s+5}A_{s+4} \cdots A_{k-1}B_1A_kB_2 \cdots \\ 
                                               & & B_{s+1}B_1 \cdots B_k \\
\end{eqnarray*}
\begin{eqnarray*}                                               
                      \Leftrightarrow \delta_3 &=& A_{s+2}A_{s+1}A_{s+3}A_{s+2}A_{s+4}A_{s+3}A_{s+5}A_{s+4} \cdots A_{k-1}B_1A_kB_2 \cdots \\ 
                                               & & B_sB_1 \cdots B_{s-1}B_{s+1}B_sB_{s+1} \cdots B_k\\
                                               &=&  A_{s+2}A_{s+1}A_{s+3}A_{s+2}A_{s+4}A_{s+3}A_{s+5}A_{s+4} \cdots A_{k-1}B_1A_kB_2 \cdots \\ 
                                               & & B_{s-1}B_1 \cdots B_{s-2}B_sB_{s-1}B_{s+1}B_sB_{s+1} \cdots B_k \\
& \vdots 
\end{eqnarray*}
\smallskip

\begin{eqnarray*}
                                               &=& A_{s+2}A_{s+1}A_{s+3}A_{s+2}A_{s+4}A_{s+3}A_{s+5}A_{s+4} \cdots A_{k-1}B_1A_kB_2B_1B_3B_2B_4B_3 \\ 
                                               & & B_5B_4 \cdots B_sB_{s-1}(B_{s+1}B_sB_{s+1})B_{s+2}B_{s+3} \cdots B_k \\
                                               &=& A_{s+2}A_{s+1}A_{s+3}A_{s+2}A_{s+4}A_{s+3}A_{s+5}A_{s+4} \cdots A_{k-1}B_1A_kB_2B_1B_3B_2B_4B_3 \\ 
                                               & & B_5B_4 \cdots (B_sB_{s-1}B_s)B_{s+1}B_sB_{s+2}B_{s+3} \cdots B_k \\
                                               &=& A_{s+2}A_{s+1}A_{s+3}A_{s+2}A_{s+4}A_{s+3}A_{s+5}A_{s+4} \cdots A_{k-1}B_1A_kB_2B_1B_3B_2B_4B_3 \\ 
                                               & & B_5B_4 \cdots (B_{s-1}B_{s-2}B_{s-1})B_sB_{s-1}B_{s+1}B_sB_{s+2}B_{s+3} \cdots B_k \\
& \vdots \\      
                                               &=& A_{s+2}A_{s+1}A_{s+3}A_{s+2}A_{s+4}A_{s+3}A_{s+5}A_{s+4} \cdots A_{k-1}B_1A_k(B_2B_1B_2)B_3B_2 \\ 
                                               & & B_4B_3B_5B_4 \cdots B_sB_{s-1}B_{s+1}B_sB_{s+2}B_{s+3} \cdots B_k \\
                                               &=& A_{s+2}A_{s+1}A_{s+3}A_{s+2}A_{s+4}A_{s+3}A_{s+5}A_{s+4} \cdots A_{k-1}(B_1A_kB_1)B_2B_1B_3B_2 \\ 
                                               & & B_4B_3B_5B_4 \cdots B_sB_{s-1}B_{s+1}B_sB_{s+2}B_{s+3} \cdots B_k \\
                                               &=& A_{s+2}A_{s+1}A_{s+3}A_{s+2}A_{s+4}A_{s+3}A_{s+5}A_{s+4} \cdots A_{k-1}A_{k-2}(A_kA_{k-1}A_k) \\ 
                                               & & B_1A_kB_2B_1B_3B_2B_4B_3B_5B_4 \cdots B_sB_{s-1}B_{s+1}B_sB_{s+2}B_{s+3} \cdots B_k \\              & \vdots \\
                                               &=& (A_{s+2}A_{s+1}A_{s+2})A_{s+3}A_{s+2}A_{s+4}A_{s+3}A_{s+5}A_{s+4}A_{s+6}A_{s+5} \cdots A_{k-1}A_{k-2} \\
                                               & & A_kA_{k-1}B_1A_kB_2B_1B_3B_2B_4B_3B_5B_4 \cdots B_sB_{s-1}B_{s+1}B_sB_{s+2}B_{s+3} \cdots B_k \\
                                               &=& A_{s+1}A_{s+2}A_{s+1}A_{s+3}A_{s+2}A_{s+4}A_{s+3}A_{s+5}A_{s+4}A_{s+6}A_{s+5} \cdots A_{k-1}A_{k-2}A_k \\ 
                                               & & A_{k-1}B_1A_kB_2B_1B_3B_2B_4B_3B_5B_4 \cdots B_sB_{s-1}B_{s+1}B_sB_{s+2}B_{s+3} \cdots B_k \\
                                               &=& A_{s+1}A_{s+2}A_{s+3} \cdots A_kB_1 \cdots B_kA_{s+1}A_{s+2}A_{s+3} \cdots A_kB_1 \cdots B_s
\end{eqnarray*}
\noindent Thus, $(xy)^mx = (yx)^my \Leftrightarrow$
\begin{eqnarray*}
 A_{s+1}A_{s+2}A_{s+3} \cdots A_kB_1 \cdots B_kA_1 \cdots A_k &=& A_{s+1}A_{s+2}A_{s+3} \cdots A_kB_1 \cdots B_k  \\
                               \Leftrightarrow A_1 \cdots A_k &=& A_{s+1}A_{s+2}A_{s+3} \cdots A_kB_1 \cdots B_s \\   
                               \Leftrightarrow A_1 \cdots A_k &=& A_{k-m+1}A_{k-m+2}A_{k-m+3} \cdots A_k \\ 
                                                              & & \cdots B_1 \cdots B_{k-m}                                                       \end{eqnarray*}
Note that this is Claim~\ref{claim2o} for $m$. We now show that Claim~\ref{claim2o} for $m$ implies Claim~\ref{claim1o} for $m+1$. Again, we set $s = k-m$. 
\begin{eqnarray*}
                                                      (xy)^{m+1} &=& (yx)^{m+1} \\
                    \Leftrightarrow A_1 \cdots A_kB_1 \cdots B_k &=& A_{s+1}A_{s+2}A_{s+3} \cdots A_kB_1 \cdots B_sA_1 \cdots A_k\\
\end{eqnarray*}
\begin{eqnarray*}
  \Leftrightarrow A_1 \cdots A_{s-1}A_s \cdots A_kB_1 \cdots B_k &=& A_1 \cdots A_{s-1}A_{s+1}A_{s+2}A_{s+3} \cdots A_kB_1 \cdots \\ 
                                                                 & & B_sA_sA_{s+1}A_{s+2} \cdots A_k\\
                    \Leftrightarrow A_s \cdots A_kB_1 \cdots B_k &=& A_{s+1}A_{s+2}A_{s+3} \cdots A_kB_1 \cdots B_sA_sA_{s+1} \\ 
                                                                 & & A_{s+2} \cdots A_k\\
                    \Leftrightarrow A_s \cdots A_kB_1 \cdots B_k &=& A_{s+1}A_sA_{s+2}A_{s+1} A_{s+3}A_{s+2}A_{s+4}A_{s+3}A_{s+5} \\ 
                                                                 & & A_{s+4} \cdots A_{k-1}A_{k-2}A_kA_{k-1} \\ 
                                                                 & & B_1A_kB_2 \cdots B_s\\
                \Leftrightarrow A_{k-m} \cdots A_kB_1 \cdots B_k &=& A_{k-m+1}A_sA_{k-m+2}A_{s+1} A_{k-m+3}A_{k-m+2} \\ 
                                                                 & & A_{k-m+4}A_{k-m+3}A_{k-m+5}A_{k-m+4} \cdots A_{k-1} \\ 
                                                                 & & A_{k-2}A_kA_{k-1}B_1A_kB_2 \cdots B_{k-m}\\
\end{eqnarray*} 
Therefore, Claims~\ref{claim1o} and~\ref{claim2o} hold for all $m < k$, by induction. It remains to show that Claim~\ref{claim2o} is true when $m = k$. But this is obvious since, in this case, $(xy)^mx = (yx)^my \Leftrightarrow A_1 \cdots A_k = A_1 \cdots A_k$. 
 \medskip 

Now we prove Theorem~\ref{odd Artin relation theorem}, which is an easy consequence of the two claims. For the sufficient condition, suppose $n = t(2k+1)$, $t \in \mathbb{N}$. It suffices to show $\ell = 2k+1 \Rightarrow prod(x,y;\ell) = prod(y,x;\ell)$. Clearly, $m =k$ when $\ell = 2k+1$. Therefore, it follows by, Claim~\ref{claim2o}, that 
\[
  prod(x,y;\ell) = prod(y,x;\ell) \Leftrightarrow A_1 \cdots A_k = A_{k-k+1} \cdots A_kB_1 \cdots B_{k-k}
\]
\[
  \hspace{2.3cm} \Leftrightarrow A_1 \cdots A_k = A_1 \cdots A_k
\] 

For the necessary condition, assume that $2k+1 \nmid \ell$. Then, $prod(x,y;\ell)= prod(y,x;\ell)$ if and only if $prod(x,y;r) = prod(y,x;r)$ for some $r \in \left\{ 1,2,\cdots,2k \right\}$, where $\ell \equiv r \mathit{(mod(2k+1))}$.
 \smallskip
 
If $r$ is even, then by Claim~\ref{claim1o}, $prod(x,y;r) = prod(y,x;r) \Leftrightarrow$
\begin{eqnarray*}
  A_{k-q+1} \cdots A_kB_1 \cdots B_k &=& A_{k-q+2}A_{k-q+1}A_{k-q+3}A_{k-q+2}A_{k-q+4}A_{k-q+3}A_{k-q+5} \\ 
                                     & & A_{k-q+4} \cdots A_{k-1}B_1A_kB_2 \cdots B_{k-q+1}
\end{eqnarray*}
\noindent where $q = \frac{r}{2}$. Note that since $r \leq 2k$, this equation holds for $q \leq k$, $k \geq 2$.
\begin{eqnarray*}
  LHS(a_{k-q+1}) &=& A_{k-q+1}A_{k-q+2}(a_{k-q+1}) \\
                 &=& a_{k-q+2} 
\end{eqnarray*}
\begin{eqnarray*}
  RHS(a_{k-q+1}) &=&    A_{k-q+2}A_{k-q+1}A_{k-q+3}A_{k-q+2}(a_{k-q+1}) \\
                 &=&    A_{k-q+2}A_{k-q+3}A_{k-q+1}A_{k-q+2}(a_{k-q+1}) \\
                 &=&    A_{k-q+2}A_{k-q+3}(a_{k-q+2})\\
                 &=&    a_{k-q+3}\\
                 &\neq& a_{k-q+2}
\end{eqnarray*}
Hence, LHS $\neq$ RHS.
 \smallskip
 
If $r$ is odd, then $prod(x,y;r) = prod(y,x;r) \Leftrightarrow A_1 \cdots A_k = A_{k-p+1} \cdots A_kB_1 \cdots B_{k-p} $, where $p = \frac{r-1}{2}$. But then, provided that $k-p+1 > 1$, we have $LHS(a_1) = A_1A_2(a_1) = a_2$, while $RHS(a_1) = a_1 \neq a_2$. Hence, LHS $\neq$ RHS.
  \medskip
  
It remains to show that $k-p+1 > 1$. Indeed, $p \geq k \Rightarrow \frac{r-1}{2} \geq k \Rightarrow r \geq 2k+1$, contradicting $r \in \left\{ 1,   \cdots, 2k \right\}$. 
\end{proof}

\section{Artin relations from dihedral foldings}
\label{sec:7}
   
   In this section, we prove Theorems~\ref{bdrelthm1} and~\ref{bdrelthm2}, which provide more Artin relations in the mapping class group. As a matter of fact, these theorems give explicit elements $x$ and $y$ in Mod(S) that generate Artin groups of types $I_2(k)$ and $I_2(2k-2)$ respectively. In the proofs of Theorems~\ref{bdrelthm1} and~\ref{bdrelthm2}, we invoke LCM-homomorphisms (described in subsection~\ref{sec:3}) induced by the dihedral foldings $A_{k-1} \rightarrow I_2(k)$ and $D_k \rightarrow I_2(2k-2)$ respectively. The induced embeddings between the corresponding Artin groups provide $k$ and $2k-2$ Artin relations in $\mathcal{A}(A_{k-1})$ and $\mathcal{A}(D_k)$ respectively. Since the Artin groups of types $A_{k-1}$ and $D_k$ inject into the corresponding mapping class groups via the geometric homomorphism, we obtain Artin relations of length $k$ and $2k-2$ in $Mod(S_{\Gamma})$, $\Gamma = A_{k-1}, D_k$. Here, the discussion follows subsection~\ref{sec:3}, including notation and terminology.
   \medskip
   
\begin{proof}[Proof of Theorem~\ref{bdrelthm1}]
We only prove the case when $k$ is even. The odd case is proved similarly. Assume $k$ is an even integer greater than $3$. Consider the Coxeter graphs $A_{k-1}$ and $I_2(k)$, and label their vertices by the sets $P = \{ s_1,s_2,\cdots,s_{k-1} \}$ and $Q = \{ s,t \}$ respectively. Partition $P$ into $K_s = \{s_1,s_3,\cdots,s_{k-3},s_{k-1} \}$ and $K_t = \{s_2,s_4,\cdots,s_{k-4},s_{k-2} \}$. By Corollary~\ref{corollary to cr3}, the dihedral folding $f : A_{k-1} \rightarrow I_2(k)$ such that $f(K_s) = s$ and $f(K_t) = t$ induces the LCM-homomorphism 
\begin{center}
 $\phi^f : \mathcal{A}^+(I_2(k)) \rightarrow \mathcal{A}^+(A_{k-1})$\\
  \medskip
 \hspace{1.5cm} $s \mapsto \Delta_{f^{-1}(s)}$\\
  \medskip
 \hspace{1.5cm} $t \mapsto \Delta_{f^{-1}(t)}$\\ 
\end{center}
\noindent By Lemma \ref{lcm lem}, $\Delta_{f^{-1}(s)} = s_1 s_3 \cdots s_{k-3} s_{k-1}$ and $\Delta_{f^{-1}(t)} = s_2 s_4 \cdots s_{k-4} s_{k-2}$. Corollary~\ref{corollary to cr3} implies that $\phi^f$ is injective. By Theorem \ref{cr2}, $\phi^f$ induces an injective homomorphism $\phi$ between the corresponding Artin groups. The curve graph associated to the $a_i$ is isomorphic to $A_{k-1}$. Consequently, the geometric homomorphism 
\begin{center}
 $g : \mathcal{A}(A_{k-1}) \rightarrow Mod(S_{A_{k-1}})$\\
  \medskip
\hspace{-0.1cm}$s_i \mapsto T_i$\\
\end{center}
\noindent is injective by Theorem~\ref{pv}. As such, the composition $g \circ \phi$ gives a monomorphism of $\mathcal{A}(I_2(k))$ into $Mod(S_{A_{k-1}})$. 
 \medskip
 
Since $x$ and $y$ generate $\mathcal{A}(I_2(k))$, $prod(x,y;k) = prod(y,x;k)$. From this equality, it follows immediately that $prod(x,y;pk) = prod(y,x;pk)$ for all positive integers $p$. This proves the sufficient condition of the last statement in Theorem~\ref{bdrelthm1}. For the necessary condition, assume $k \nmid n$ and $prod(x,y;n) = prod(y,x;n)$. Then $prod(x,y;r) = prod(y,x;r)$ for some $r \in \left\{ 1, \cdots, k-1 \right\}$. But since $g \circ \phi$ is injective, this would mean that $prod(s,t;r) = prod(t,s;r)$ in $\mathcal{A}(I_2(k))$, which is a contradiction. 
\end{proof}

\begin{figure}
	\centering
		\includegraphics[width=0.70\textwidth]{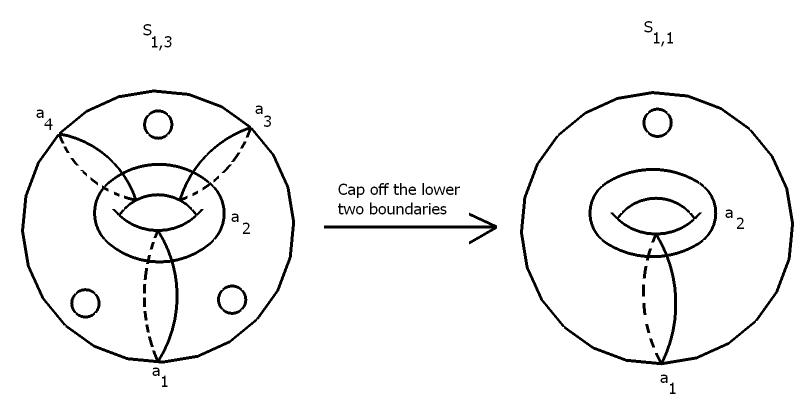}
	\caption{{\small If $x = T_1 T_3 T_4$ and $y = T_2$, it follows from Theorem~\ref{bdrelthm2} that $(xy)^3 = (yx)^3$ in $Mod(S_{1,3})$. By capping off the two boundary components, one gets the relation $(T_1^3 T_2)^3 = (T_2 T_1^3)^3$ in $Mod(S_{1,1})$.}}
	\label{T_1^3 and T_2 satisfy 6-Artin reln}
\end{figure}

\begin{proof}[Proof of Theorem~\ref{bdrelthm2}]
Again, we only prove the even case. The odd one is proved the same way. Label the vertices of $D_k$ and $I_2(2k-2)$ by $P = \{s_1, s_2,\cdots, s_k \}$ and $Q = \{ s,t \}$ respectively. Partition $P$ into $K_s = \{s_1, s_3,\cdots, s_{k-1}, s_k \}$ and $K_t = \{s_2, s_4,\cdots, s_{k-2} \}$. Both $K_s$ and $K_t$ consist of pairwise commuting generators of $\mathcal{A}^+(D_k)$. By Lemma \ref{lcm lem}, the least common multiple of each of these sets is the product of its elements (in any order). The dihedral folding $f: D_k \rightarrow I_2(2k-2)$ induces the LCM-homomorphism $\phi^f : \mathcal{A}^+(I_2(2k-2)) \rightarrow \mathcal{A}^+(D_k)$, which maps
\begin{center}
 $s \mapsto \Delta_{f^{-1}(s)} = s_1 s_3 \cdots s_{k-1} s_k $\\
  \medskip
 $t \mapsto \Delta_{f^{-1}(t)} = s_2 s_4 \cdots s_{k-2}$\\ 
\end{center}
\noindent Since $\phi^f$ is injective, the induced map, $\phi$, on the corresponding Artin group is injective as well. By post-composing with the geometric homomorphism $g: \mathcal{A}(D_k) \rightarrow Mod(S_{D_k})$, one gets an embedding of $I_2(2k-2)$ into $Mod(S_{\frac{k-2}{2},3})$. This produces a subgroup of $Mod(S_{\frac{k-2}{2},3})$ which is isomorphic to the Artin group $A(I_2(2k-2))$, and is generated by $x$ and $y$. Since $prod(s,t;2k-2) = prod(t,s;2k-2)$, it follows that $prod(x,y;2k-2) = prod(y,x;2k-2)$. The rest of the proof follows as in Theorem \ref{bdrelthm1}.
\end{proof}

Finally, the results in this paper also imply other relations in the mapping class group. For example, it was pointed to us by Dan Margalit that when $i(a,b) = 1$, $T_a^3$ and $T_b$ satisfy an Artin relation of length $6$. In the following corollary, we show that this fact follows from Theorem~\ref{bdrelthm2}

\begin{cor}[Corollary to Theorem~\ref{bdrelthm2}]\label{corollary to bdrelthm2}
Let $a_1$ and $a_2$ be isotopy classes of simple closed curves in $S$. If $i(a_1,a_2) = 1$, then $T_1^3$ and $T_2$ satisfy an Artin relation of length $6$.
\end{cor}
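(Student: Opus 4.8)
The plan is to specialize Theorem~\ref{bdrelthm2} to the smallest admissible value of the parameter, namely $k = 4$, and then reduce the resulting surface relation to the claimed statement by capping off boundary components. When $k = 4$, the Coxeter graph $D_4$ has four vertices $s_1, s_2, s_3, s_4$ with $s_1, s_3, s_4$ each joined to the central vertex $s_2$; the associated Artin group is $\mathcal{A}(I_2(2k-2)) = \mathcal{A}(I_2(6))$. Since $k = 4$ is even, Theorem~\ref{bdrelthm2} gives $x = T_1 T_3 T_4$ and $y = T_2$, and asserts that $x$ and $y$ generate $\mathcal{A}(I_2(6))$ inside $Mod(S_{D_4})$, with $prod(x,y;6) = prod(y,x;6)$, i.e. $(T_1 T_3 T_4 T_2)^3 = (T_2 T_1 T_3 T_4)^3$ in $Mod(S_{D_4})$.

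Next I would identify the surface. By Fact~\ref{Sdn thm} with $n = 4$, the surface $S_{D_4}$ is homeomorphic to $S_{1,3}$ (genus one, three boundary components). In $D_4$ the curves $a_1, a_3, a_4$ are pairwise disjoint and each meets $a_2$ once; so $T_1, T_3, T_4$ pairwise commute by Fact~\ref{comm reln}. The key geometric input is that $a_1, a_3, a_4$ are three disjoint curves forming the ``outer'' legs of the $D_4$ configuration, and one can arrange that the boundary curves of $S_{1,3}$ are isotopic to (small push-offs of neighborhoods of) suitable sub-configurations, so that capping off appropriate boundary components kills two of the three Dehn twists among $T_1, T_3, T_4$ and leaves, say, $a_3$ and $a_4$ becoming isotopic to $a_1$. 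Alternatively—and this is the cleaner route—I would cap the boundary components of $S_{1,3}$ so that $a_1$, $a_3$, $a_4$ all become isotopic to a single curve $a$ meeting $a_2 =: b$ once; under the induced homomorphism $Mod(S_{1,3}) \to Mod(S_{1,1})$, the classes $T_1, T_3, T_4$ all map to $T_a$ while $T_2$ maps to $T_b$. Then $x = T_1 T_3 T_4 \mapsto T_a^3$ and $y = T_2 \mapsto T_b$, and the relation $(xy)^3 = (yx)^3$ pushes forward to $(T_a^3 T_b)^3 = (T_b T_a^3)^3$ in $Mod(S_{1,1})$, which is exactly the $6$-Artin relation between $T_1^3$ and $T_2$ (after renaming). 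The general statement for an arbitrary surface $S$ follows because any pair of curves with $i(a_1, a_2) = 1$ has a regular neighborhood homeomorphic to $S_{1,1}$, and the inclusion induces a homomorphism $Mod(S_{1,1}) \to Mod(S)$ carrying $T_a, T_b$ to $T_1, T_2$; relations are preserved under this map.

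In order I would: (1) invoke Theorem~\ref{bdrelthm2} at $k=4$ to get $(T_1 T_3 T_4 T_2)^3 = (T_2 T_1 T_3 T_4)^3$ in $Mod(S_{1,3})$; (2) exhibit the explicit picture of the $D_4$ chain inside $S_{1,3}$ and verify that capping off two of the three boundary components merges $a_1, a_3, a_4$ into a single isotopy class $a$ and turns $S_{1,3}$ into $S_{1,1}$, with $a_2$ descending to a curve $b$ with $i(a,b)=1$; (3) apply the capping homomorphism $Mod(S_{1,3}) \to Mod(S_{1,1})$ to the relation in (1), using that a Dehn twist maps to the Dehn twist about the image curve (and to the identity if the curve bounds a disk after capping), obtaining $(T_a^3 T_b)^3 = (T_b T_a^3)^3$; (4) embed $S_{1,1}$ as a neighborhood of $a_1 \cup a_2$ in $S$ and pull the relation forward via $Mod(S_{1,1}) \to Mod(S)$ to conclude $prod(T_1^3, T_2; 6) = prod(T_2, T_1^3; 6)$.

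The main obstacle is step (2): one must produce, and justify, a concrete embedding of the $D_4$ curve configuration into $S_{1,3}$ whose three boundary components are positioned so that capping the right ones identifies $a_1, a_3, a_4$ with a single curve meeting $a_2$ once. This is essentially a picture-chasing argument (cf. Figure~\ref{T_1^3 and T_2 satisfy 6-Artin reln}, which records exactly this for $x = T_1 T_3 T_4$, $y = T_2$), but it is the only nontrivial input; everything else is the functoriality of $Mod$ under capping and inclusion, together with the already-proven Theorem~\ref{bdrelthm2}. One should also double-check that the three curves $a_1, a_3, a_4$ in $S_{D_4}$ really do admit such a symmetric arrangement—equivalently, that the capping map sends all three of $T_1, T_3, T_4$ to the same Dehn twist rather than to distinct ones or to the identity—which is precisely what the figure asserts.
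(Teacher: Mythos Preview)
Your proposal is correct and follows essentially the same route as the paper: specialize Theorem~\ref{bdrelthm2} to $k=4$ to obtain $(T_1T_3T_4T_2)^3=(T_2T_1T_3T_4)^3$ in $Mod(S_{1,3})$, then cap two boundary components so that $a_1,a_3,a_4$ become isotopic and push the relation forward to $(T_1^3T_2)^3=(T_2T_1^3)^3$ in $Mod(S_{1,1})\subset Mod(S)$. The paper handles your ``main obstacle'' (step (2)) by building the $D_4$ configuration in reverse---starting from $S_{1,1}$, taking three parallel copies of $a_1$, and removing two disks between them---which makes the identification under capping transparent by construction.
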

\begin{proof}
Let $F$ be a regular neighborhood of $a_1 \cup a_2$ so that $F$ is homeomorphic to $S_{1,1}$. In $F$, consider three parallel copies of $a_1$ denoted by $a_1$, $a_3$, and $a_4$. Now remove two open disks from $F$ to obtain $S_{1,3}$ and curves $a_1, a_2, a_3$, and $a_4$ as in Figure~\ref{T_1^3 and T_2 satisfy 6-Artin reln}.
 \medskip

When $k = 4$, Theorem~\ref{bdrelthm2} implies that $x = T_1 T_3 T_4$ and $y = T_2$ satisfy $xyxyxy = yxyxyx$ in $Mod(S_{D_4}) = Mod(S_{1,3})$. By Theorem~\ref{pv}, the subgroup $G$ of $Mod(S_{1,3})$ generated by $T_1, T_2, T_3$, and $T_4$ is isomorphic to $\mathcal{A}(D_4)$. Now, reverse the process and cap off the two boundary components to recover $F \approx S_{1,1}$. There is a homomorphism $G \rightarrow Mod(S_{1,1})$ defined by $T_2 \mapsto T_2$ and $T_j \mapsto T_1$ for $j = 1,3,4$. The image of $xyxyxy = yxyxyx$ under this homomorphism is $(T_1^3 T_2)^3 = (T_2 T_1^3)^3$ in $Mod(S_{1,1})$. Of course, the same relation is true in Mod(S).
\end{proof}

\noindent \textbf{Acknowledgements.} I am very grateful to my advisor, Sergio Fenley, for his revisions of this article and for the many stimulating conversations. I am thankful to Dan Margalit for his valuable suggestions, and to Chris Leininger for his positive feedback. I am also indebted to the referee for suggesting many improvements to the original submission.

\end{document}